\documentclass[11pt]{amsart}
\usepackage{verbatim}
\usepackage{amssymb}
\usepackage{colonequals}
\usepackage{mathtools}
\usepackage[bookmarks,colorlinks,breaklinks]{hyperref}  
\usepackage[top=2cm, bottom=4.5cm, left=3cm, right=3cm]{geometry}
\hypersetup{linkcolor=blue,citecolor=blue,filecolor=blue,urlcolor=blue} 
\usepackage{bm}
\usepackage{color}
\usepackage{amsfonts,amsmath,amsthm,amssymb,amscd, hyperref}

\numberwithin{equation}{subsection}

\newtheorem{theorem}{Theorem}[section]
\newtheorem{corollary}[theorem]{Corollary}
\newtheorem{lemma}[theorem]{Lemma}
\newtheorem{proposition}[theorem]{Proposition}

\theoremstyle{definition}

\newtheorem{definition-theorem}[theorem]{Definition-Theorem}

\newtheorem{remark}[theorem]{Remark}

\theoremstyle{remark}

\newtheorem*{remark*}{Remark}

\usepackage{enumitem}
\setenumerate[1]{label=\textup{(\alph*)}}
\setenumerate[2]{label=\textup{(\roman*)}}

\renewcommand{\l}{\lambda}

\newcommand{\OO}{\mathcal{O}}

\newcommand{\ZZ}{\mathbb{Z}}

\newcommand{\QQbar}{\overline{\mathbb{Q}}}

\newcommand{\Z}{{\mathbb Z}}

\newcommand{\N}{{\mathbb N}}

\newcommand{\Q}{{\mathbb Q}}
\newcommand{\C}{{\mathbb C}}

\newcommand{\R}{{\mathbb R}}

\newcommand{\Qbar}{{\overline{\mathbb Q}}}

\newcommand{\Lbar}{{\overline{L}}}

\newcommand{\End}{{\rm End}}

\newcommand{\lra}{\longrightarrow}

\begin{document}
\title[Collision of orbits]{Collision of orbits on an elliptic surface}

\author{Dragos Ghioca}
\address{Department of Mathematics, University of British Columbia, Vancouver, BC V6T 1Z2}
\email{dghioca@math.ubc.ca}

\author{Negin Shadgar}
\address{Department of Mathematics, University of British Columbia, Vancouver, BC V6T 1Z2}
\email{negin@math.ubc.ca}



\subjclass[2020]{Primary 37P15; Secondary 11G35}

\keywords{elliptic surfaces; unlikely intersections; collision of orbits}

\begin{abstract}
Let $C$ be a smooth projective curve defined over $\Qbar$, let $\pi:\mathcal{E}\lra C$ be an elliptic surface and let  $\sigma_{P_1},\sigma_{P_2},\sigma_{Q}$ be sections of $\pi$ (corresponding to points $P_1,P_2,  Q$ of the generic fiber $E$ of $\mathcal{E}$).  We obtain a precise characterization, expressed solely in terms of the dynamical relations between the points $P_1,P_2,Q$ with respect to the endomorphism ring of $E$, so  that there exist infinitely many $\l\in C(\Qbar)$ with the property that for some nonzero integers $m_{1,\l},m_{2,\l}$, we have that $[m_{i,\l}](\sigma_{{P_{i}}}(\l))=\sigma_{Q}(\l)$ (for $i=1,2$) on the smooth fiber $E_\l$ of  $\mathcal{E}$. 
\end{abstract}

\maketitle

\section{Introduction}
\label{sec:intro}


\subsection{Notation and setup for our problem}

For an elliptic curve $E$ defined over a field $L$ of characteristic $0$, we denote by $\End(E)$ the ring of group endomorphisms of $E$ (defined over an algebraic closure $\Lbar$ of $L$). It is known (see \cite[Chapter~2]{Silverman}) that $\End(E)$ is isomorphic either with $\Z$ (when $E$ has no complex multiplication), or with an order in an imaginary quadratic number field (when $E$ has \emph{complex multiplication, also referred to as $CM$}). So, we identify $\End(E)$ with either $\Z$ (in the non-$CM$ case), or with an order $\OO$ in an imaginary quadratic number field (in the $CM$ case); more precisely, for each $n\in\Z$ (respectively, for each $\alpha\in\OO$), we denote by $[n]$ (resp.  $[\alpha]$) the corresponding endomorphism of $E$.

Throughout our paper, $C$ is a smooth, projective (irreducible) curve defined over $\Qbar$. Also, we consider $\pi:\mathcal{E}\lra C$ be a an elliptic surface, where the generic fiber of $\pi$ is an elliptic curve $E$ defined over $\Qbar(C)$, while for all but finitely many points $\l\in C(\Qbar)$, the fiber $E_\l:=\pi^{-1}(\{\l\})$ is an elliptic curve defined over $\Qbar$. We will often refer to $E_\l$ as the \emph{$\l$-fiber} of $\pi$. Each  section $\sigma$ of $\pi$ (i.e. a map $\sigma:C\lra\mathcal{E}$ such that $\pi\circ \sigma={\rm id}|_C$) gives
rise to a $\Qbar(C)$-rational point of $E$. Conversely, a point $P\in E(\Qbar(C))$ corresponds
to a section of $\pi$, denoted $\sigma_P$. For all but finitely many $\l\in C(\Qbar)$, the intersection of the
image of $\sigma_P$ in $\mathcal{E}$ with the smooth fiber $E_\l$ above $\l$ is a point $P_\l := \sigma_P(\l)$ on the elliptic curve $E_\l$; we will refer to $P_\l$ as the \emph{$\l$-specialization of the point $P$} and we drop the mention of the corresponding section $\sigma_P$. Furthermore, when working with a point $P_i\in E(\Qbar(C))$ (for some index $i$), we use the notation $P_{i,\l}$ for the corresponding $\l$-specialization of $P_i$.

We say that $\mathcal{E}$ is \emph{isotrivial} if the $j$-invariant (see \cite[Chapter~3]{Silverman}) of the generic fiber $E$ is a constant function (on $C$); for isotrivial elliptic surfaces $\mathcal{E}$, all smooth fibers of $\pi$ are isomorphic to $E$. If $\mathcal{E}$ is isotrivial, then at the expense of replacing $C$ by a finite cover $C'$ (defined over $\Qbar$) and also,  replacing $\mathcal{E}$ by  $\mathcal{E}\times_CC'$, we may assume $\mathcal{E}$ is a \emph{constant elliptic surface}. So, when $E$ is isotrivial, since our question is unchanged when replacing $C$ by a finite cover, we always assume there exists an elliptic curve $E_0$ defined over $\Qbar$ such that $\mathcal{E}=E_0\times_{Spec(\Qbar)}C$. In this case, we say that a section $\sigma_P$ of $\mathcal{E}$  is \emph{constant} if $P\in E_0(\Qbar)$. 

In our paper, we study the following question for  elliptic surfaces $\mathcal{E}\lra C$ defined over $\Qbar$; as before, we denote by $E$ the generic fiber of $\mathcal{E}$. So, with the above notation,  given two \emph{starting points} $P_1,P_2\in E(\Qbar(C))$ and one \emph{target point} $Q\in E(\Qbar(C))$ (all three of them non-torsion), we obtain necessary and sufficient conditions (in terms of the dynamics of $P_1,P_2,Q$ under the action of $\End(E)$)  so that $Q_\l$ is contained in both the cyclic group generated by  $P_{1,\l}$ and in the cyclic group generated by $P_{2,\l}$ inside $E_\l$.


\subsection{Our main result}

We prove the following statement.
\begin{theorem}
\label{thm:main}
Let $C$ be a smooth projective curve defined over $\Qbar$, let $\mathcal{E}\lra C$ be an elliptic surface and let  $P_1,P_2,  Q\in E(\Qbar(C))$ be non-torsion points on the generic fiber $E$ of $\pi$. Furthermore, if $\mathcal{E}$ is isotrivial, then not all three sections $\sigma_{P_1},\sigma_{P_2},\sigma_{Q}$ are constant. Then there exist infinitely many $\l\in C(\Qbar)$ such that for some nonzero integers $m_{1,\l},m_{2,\l}$, we have that $[m_{i,\l}](P_{i,\l})=Q_\l$ on the $\l$-fiber $E_\l$ of $\pi$ if and only if  at least one of the following conditions holds:
\begin{itemize}
\item[(A)] there exists a nonzero integer $k$ such that for some $i\in\{1,2\}$, we have $[k](P_i)=Q$ on $E$. 
\item[(B)] there exist nonzero integers $k_1$ and $k_2$ such that $[k_1](P_1)=[k_2](P_2)$ on $E$.
\item[(C)] $E$ has complex multiplication by some order $\OO$ in an imaginary quadratic number field and  there exist nonzero $\alpha_1,\alpha_2,\beta\in \OO$ with $\alpha_1/\alpha_2\notin\Q$ such that $[\alpha_1](P_1)=[\alpha_2](P_2)=[\beta](Q)$ on $E$.
\end{itemize}
\end{theorem}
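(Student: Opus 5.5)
We prove the two directions separately. The ``if'' direction is a matter of exhibiting infinitely many $\l$. The ``only if'' direction is an unlikely-intersections argument in the style of Masser--Zannier and of the Barroero--Capuano ``relative Manin--Mumford'' results for $n$ independent points on a fibered power of an elliptic surface.

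\emph{Sufficiency.} Suppose (A) holds, say $[k](P_1)=Q$. Then $[k](P_{1,\l})=Q_\l$ for all but finitely many $\l$, so we only need infinitely many $\l$ with $Q_\l\in\langle P_{2,\l}\rangle$.
\begin{itemize}
\item If $P_2$ and $Q$ are linearly dependent over $\End(E)$, we use that dependence directly. In the $\OO$-dependent but not $\Z$-dependent case we argue as in (C) below.
\item Otherwise we take $\l$ such that $P_{2,\l}$ and $Q_\l$ are both torsion with compatible orders, or more simply $\l$ with $Q_\l=[m](P_{2,\l})$ for varying $m$. Each equation $[m](P_2)-Q=0$ defines a nonzero section, since $P_2$ and $Q$ are independent. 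Its zero locus on $C$ is nonempty for $m\gg0$, by a height/degree count: the pullback of the zero section has degree growing like $m^2\hat h(P_2)$. Distinct $m$ give distinct $\l$, because if $\l$ satisfied two of them then $P_{2,\l}$ would be torsion; we discard those finitely many $\l$, or handle them via Silverman specialization.
\end{itemize}
In the isotrivial case the nonconstancy hypothesis supplies the needed degree growth.

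Case (B) is symmetric. We pick $m$ with $[mk_1](P_{1,\l})=[mk_2](P_{2,\l})=Q_\l$, solving $[mk_1](P_1)=Q$ in $\l$ as above.

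For (C), write $\beta Q=\alpha_1P_1=\alpha_2P_2=:R$. We want $\l$ where $Q_\l=[n](P_{1,\l})$. This forces $[n\alpha_2](P_{2,\l})=[n\alpha_1](P_{1,\l})\cdot(\alpha_2/\alpha_1)$, i.e. a membership relation with an $\OO$-multiplier. Multiplying by suitable integers (norms), we look for $\l$ where $R_\l$ satisfies $[N]R_\l=0$ with $N$ chosen so that the relevant endomorphisms act as integers on $E_\l[N]$. The torsion parameters for $R$ are infinite by the same degree argument. On $E_\l[N]$ for $N$ split in $\OO$, each $\alpha$ acts as a scalar modulo $N$. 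From this we solve $Q_\l=[m_i](P_{i,\l})$ modulo the kernel, choosing $N$ coprime to $\beta$ and the $\alpha_i$.

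\emph{Necessity.} Suppose infinitely many $\l$ satisfy both relations and (A) and (B) fail. Then each of the pairs $(P_1,Q)$, $(P_2,Q)$, $(P_1,P_2)$ is $\Z$-independent.
\begin{itemize}
\item \emph{Generic rank 3.} If $P_1,P_2,Q$ are $\End(E)$-independent, the two relations $Q=[m_1]P_1=[m_2]P_2$ place the point $(P_{1,\l},P_{2,\l},Q_\l)$ in a codimension-2 subgroup scheme of the fibered cube. This is impossible for infinitely many $\l$ on the curve $C$ by the Barroero--Capuano theorem. The latter handles only torsion-type relations, so we use its extension to arbitrary algebraic subgroups; in the isotrivial case we use the analogous result of Habegger/Viada.
\item \emph{Rank 2.} If the span has $\End(E)$-rank 2, with a relation $a_1P_1+a_2P_2+bQ=0$, then one relation with integer coefficients at $\l$ is implied and the other is a genuinely new codimension-1 condition. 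Here we need the bounded-height plus Galois-orbit argument: Silverman specialization and the canonical-height identity $\hat h(Q_\l)=m_i^2\hat h(P_{i,\l})$ bound $m_i$. The integer relations $m_1,m_2$ and the identity $a_1P_1+a_2P_2+bQ=0$ together force an $\OO$-relation at $\l$ of bounded coefficients, which then lifts generically.
\item \emph{Non-CM.} In the non-CM case the lifted relation contradicts the failure of (A) and (B).
\item \emph{CM.} In the CM case it yields exactly (C), with $\alpha_1/\alpha_2\notin\Q$ precisely because (B) fails.
\end{itemize}

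The main obstacle is the rank-2 CM step. There one must show the specialized integer relations, combined with the generic $\OO$-relation, lift to the precise form (C) rather than a weaker relation. We will attempt this by bounding $m_i$ through heights, then applying pigeonhole across infinitely many $\l$ to obtain a fixed relation holding at infinitely many fibers, hence generically.
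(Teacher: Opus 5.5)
\textbf{Gap 1: sufficiency under (C).} This argument rests on a false claim. No integer $N>1$, split or not, makes the endomorphisms act as integers on $E_\l[N]$. If $[\theta]$ acted as $[m]$ on $E_\l[N]$, then $[\theta-m]$ would factor through $[N]$. That means $N\mid\theta-m$ in $\OO=\Z[\theta]$, which is impossible because the $\theta$-coefficient is $1$.

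So knowing only that $R_\l$ is torsion gives no control, and the conclusion genuinely fails at most such $\l$. Take $\OO=\Z[i]$, a constant curve, $Q$ non-constant, $P_1=[1+i](Q)$ and $P_2=[1-i](Q)$; here only (C) holds. Suppose $Q_\l$ generates $E_\l[N]$ as an $\OO$-module with $N>1$. Then $Q_\l=[m(1+i)](Q_\l)$ would force $N\mid m$ and $N\mid m-1$, which is impossible.

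The missing idea is to choose $\l$ with $Q_\l$ killed by a \emph{primitive} element $\gamma=a+4\theta$, i.e. one with no rational prime divisor.
\begin{itemize}
\item Then $\Z\to\OO/\gamma\OO$ is onto (Proposition~\ref{prop:more_general}), so $\OO$ acts through $\Z$ on the cyclic group $\langle Q_\l\rangle$.
\item Choosing $a$ so that $N(\gamma)$ is coprime to $N(\alpha_1)N(\alpha_2)N(\beta)$ (Lemma~\ref{lem:arithm_2}) makes $\beta$ and $\bar{\alpha_i}$ act as automorphisms of $\langle Q_\l\rangle$.
\item Hence $[N(\alpha_i)](P_{i,\l})$ generates $\langle Q_\l\rangle$, which gives the integer multipliers.
\item Infinitely many such $\l$ come from Lemma~\ref{lem:solutions} applied to $[2M](Q)$ and $[1-2\ell-4\theta](Q)$, not from a bare count of torsion parameters.
\end{itemize}

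\textbf{Gap 2: necessity, the step that must produce (C).} Bounding $m_i$ via $\hat h(Q_\l)=m_i^2\hat h(P_{i,\l})$ and Silverman's theorem only works for $\l$ of large height. In situation (C), every collision is a torsion specialization. Normalize $\alpha_1\in\Z$, so $\alpha_2\notin\Q$. A collision then gives $[n_{2,\l}\alpha_1-n_{1,\l}\alpha_2](P_{2,\l})=0$ with a nonzero coefficient. So the heights vanish and the $m_i$ are unbounded. Your pigeonhole can only ever produce a fixed integer relation, i.e. (A) or (B), never (C).

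The paper's route is exactly this observation. Theorem~\ref{thm:main_0} gives $[\alpha_1](P_1)=[\alpha_2](P_2)$. One then deduces that $P_{2,\l}$ and $Q_\l$ are simultaneously torsion for infinitely many $\l$. Theorem~\ref{thm:main_00} then gives $[\gamma](P_2)=[\delta](Q)$, hence (C).

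Your rank-3 reduction to Barroero--Capuano and Habegger--Pila is a legitimate but heavier substitute for Theorem~\ref{thm:main_0}. The rank-2 case could be closed via simultaneous torsion plus elementary algebra on the relation matrix, but as written it is not proved.

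\textbf{Smaller errors.}
\begin{itemize}
\item Failure of (A) does not make $P_i$ and $Q$ $\Z$-independent; for example $P_1=[2](Q)$.
\item In your (A)/(B) argument, the $\l$ with $P_{2,\l}$ torsion are infinitely many (they only have bounded height), so they cannot be ``discarded''. You need either bounded local intersection multiplicities at a fixed fiber as $m$ varies, or the Betti-coordinate count behind Lemma~\ref{lem:solutions}.
\end{itemize}
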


Our Theorem~\ref{thm:main} strenghtens the main result of \cite[Theorem~1.1]{GHT-PJM}, which we state below as Theorem~\ref{thm:main_0}.

\begin{theorem}
\label{thm:main_0}
With the notation as in Theorem~\ref{thm:main} for $\mathcal{E},C,E,P_1,P_2,Q$, if there exist infinitely many $\l\in C(\Qbar)$ such that for some nonzero integers $m_{1,\l},m_{2,\l}$, we have that $[m_{i,\l}](P_{i,\l})=Q_\l$ (for $i=1,2$) on the $\l$-fiber $E_\l$ of $\pi$, then  at least one of the following two conditions holds:
\begin{itemize}
\item[(i)] there exists a nonzero integer $k$ such that for some $i\in\{1,2\}$, we have $[k](P_i)=Q$; 
\item[(ii)] there exist nontrivial endomorphisms $\alpha_1,\alpha_2 \in\End(E)$  such that $\alpha_1(P_1)=\alpha_2(P_2)$.
\end{itemize}
\end{theorem}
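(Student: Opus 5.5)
The plan is to argue by contradiction: assume there are infinitely many $\l\in C(\Qbar)$ with $[m_{1,\l}](P_{1,\l})=[m_{2,\l}](P_{2,\l})$ equal to $Q_\l$, and that neither (i) nor (ii) holds, and derive a contradiction from unlikely-intersection results for curves in the abelian scheme $\mathcal{E}^3=\mathcal{E}\times_C\mathcal{E}\times_C\mathcal{E}\lra C$.

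\textbf{Step 1 (the relevant curve and subgroups).} Consider the curve $\mathcal{C}\subset\mathcal{E}^3$ given by the image of $\l\mapsto(P_{1,\l},P_{2,\l},Q_\l)$. For each good $\l$, the point of $\mathcal{C}$ above $\l$ lies in the fiberwise algebraic subgroup
\[
\{x_3=[m_{1,\l}]x_1,\; x_3=[m_{2,\l}]x_2\}.
\]
This subgroup has codimension $2$ in a relative $3$-dimensional scheme. Since $\mathcal{C}$ has dimension $1$, these intersections are unlikely.

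\textbf{Step 2 (bounded multipliers).} First suppose the pairs $(m_{1,\l},m_{2,\l})$ take only finitely many values. Then one pair $(m_1,m_2)$ occurs for infinitely many $\l$. The section $[m_1](P_1)-Q$ then vanishes at infinitely many points of $C$, so it vanishes identically and $[m_1](P_1)=Q$. This is (i).

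\textbf{Step 3 (unbounded multipliers).} Now suppose the pairs are unbounded. I would invoke the relative Zilber--Pink theorem for curves in fibered powers of an elliptic scheme (Barroero--Capuano; in the isotrivial case the classical results on curves in $E_0^3$, which is where the hypothesis that not all sections are constant is used). It says that if $\mathcal{C}$ meets infinitely many flat subgroup schemes of codimension $\ge 2$, then $\mathcal{C}$ lies in a proper flat subgroup scheme. This gives a relation
\[
[a_1](P_1)+[a_2](P_2)+[b](Q)=0,\qquad a_1,a_2,b\in\End(E)\ \text{not all zero}.
\]
If $b=0$, this is (ii), since $P_1,P_2$ are non-torsion and so $a_1,a_2$ are both nonzero. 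So assume $b\neq 0$.

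\textbf{Step 4 (substituting the specialized relations).} Substitute $Q_\l=[m_{i,\l}](P_{i,\l})$ into the relation. This gives
\[
[a_1+bm_{1,\l}](P_{1,\l})+[a_2+bm_{2,\l}](P_{2,\l})=0
\]
for infinitely many $\l$. This is again a codimension-$2$-type condition on the curve $(P_1,P_2)$ in $\mathcal{E}^2$, unless the coefficients degenerate.

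\begin{itemize}
\item[$\bullet$] If both coefficients vanish for infinitely many $\l$, then $m_{i,\l}$ is fixed, and we are back in Step 2.
\item[$\bullet$] If exactly one coefficient is nonzero, then $P_{1,\l}$ or $P_{2,\l}$ is torsion. Via the relations $[m_{i,\l}](P_{i,\l})=Q_\l$, both $P_{1,\l}$ and $P_{2,\l}$ become torsion for infinitely many $\l$. The Masser--Zannier theorem on simultaneous torsion then gives a nontrivial $\End(E)$-relation between $P_1$ and $P_2$, which is (ii).
\item[$\bullet$] If both coefficients are nonzero, the Barroero--Capuano theorem applied to $(P_1,P_2)$ in $\mathcal{E}^2$ again yields (ii).
\end{itemize}

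\textbf{Main obstacle.} The step I expect to be hardest is Step 4, specifically the degenerate case where the ambient relation from Step 3 involves only one $P_i$ together with $Q$. There, showing that the remaining intersections are still unlikely requires precise bookkeeping of which fiberwise subgroup each point lies in, as $m_{i,\l}$ vary. I would first try a height argument: Silverman's specialization theorem gives $\hat h(P_{i,\l})\sim\hat h(P_i)h(\l)$, so for $\l$ of large height
\[
m_{i,\l}^2\approx \hat h(Q)/\hat h(P_i),
\]
which bounds the multipliers and reduces to Step 2. The points of bounded height would then be handled by the torsion argument above.
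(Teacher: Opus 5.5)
The paper gives no proof of this statement (it is quoted from \cite{GHT-PJM}), so your argument has to stand on its own. Steps 1--3 are sound. Barroero--Capuano in the non-isotrivial case, and Habegger--Pila/Viada for a curve in $E_0^3$ in the isotrivial case, give a relation $[a_1](P_1)+[a_2](P_2)+[b](Q)=0$, and $b=0$ yields (ii). The gap is in Step 4, which is the generic case.

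First, your displayed identity is miscomputed. You replaced $[b](Q_\l)$ both by $[bm_{1,\l}](P_{1,\l})$ and by $[bm_{2,\l}](P_{2,\l})$, so its left side equals $[b](Q_\l)$, not $0$. More seriously, even the correct identity, e.g. $[a_1+bm_{1,\l}](P_{1,\l})+[a_2](P_{2,\l})=0$, is a \emph{single} relation. It cuts out a codimension-one subgroup of $E_\l^2$, so for the curve $(P_1,P_2)$ in $\mathcal{E}\times_C\mathcal{E}$ it is an expected, not an unlikely, intersection; Lemma~\ref{lem:solutions} already produces infinitely many $\l$ with $[n_\l](P_{1,\l})=P_{2,\l}$. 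No Zilber--Pink statement, Barroero--Capuano included, turns this into (ii). So your third bullet is unsupported, and the degenerate case you single out under ``Main obstacle'' is not the real one. The height fallback would not help either: there can be infinitely many $\l$ of bounded height, and bounded height does not make $P_{i,\l}$ torsion.

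What is missing is the second relation that every $\l$ carries anyway. On $E_\l$ you have
\[
[a_1+bm_{1,\l}](P_{1,\l})+[a_2](P_{2,\l})=0,\qquad [m_{1,\l}](P_{1,\l})-[m_{2,\l}](P_{2,\l})=0,
\]
whose coefficient matrix has determinant $\Delta_\l=-(a_1m_{2,\l}+a_2m_{1,\l}+bm_{1,\l}m_{2,\l})$, lying in $\Z$ or in $\OO$.

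\textbf{Case $\Delta_\l\neq 0$.} Multiplying by the adjugate matrix gives $[\Delta_\l](P_{i,\l})=0$ for $i=1,2$, so both points are torsion. If this happens for infinitely many $\l$, Theorem~\ref{thm:main_00} gives (ii).

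\textbf{Case $\Delta_\l=0$.} Dividing by $m_{1,\l}m_{2,\l}$ and using $|b|^2=N(b)\ge 1$ gives
\[
1\le |b|\le \frac{|a_1|}{|m_{1,\l}|}+\frac{|a_2|}{|m_{2,\l}|},
\]
so $|m_{i,\l}|\le 2|a_i|$ for some $i$. If this happens for infinitely many $\l$, pigeonhole and your Step 2 give (i).

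With Step 4 replaced by this dichotomy, your argument is complete. It rests on relative Zilber--Pink in the fibered cube together with Masser--Zannier.
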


\begin{remark}
\label{rem:constant}
Here, we explain the hypothesis appearing in our Theorem~\ref{thm:main} regarding the points $P_1,P_2,Q$ not being constant when $\mathcal{E}$ is isotrivial. Indeed, working (after replacing $C$ by a suitable finite cover) with $\mathcal{E}=E_0\times_{{\rm Spec}(\Qbar)}C$, if $P_1,P_2,Q\in E_0(\Qbar)$ then the problem considered in Theorem~\ref{thm:main} loses its relevance since we have in this case (no matter what is the fiber of our elliptic surface)  the same dynamical system (in terms of the two starting points and the target point). In particular,  conditions~(A)-(C) stated as in Theorem~\ref{thm:main} no longer guarantee the existence of infinitely many $\l\in C(\Qbar)$ such that $Q_\l$ lives in the cyclic groups generated by $P_{i,\l}$ (for $i=1,2)$. 
\end{remark}

Our Theorem~\ref{thm:main} fits into the larger theme of unlikely intersections in arithmetic dynamics, as we will explain below.


\subsection{Unlikely intersections in arithmetic dynamics}

The following result was proved by Masser-Zannier (see \cite{M-Z-1,M-Z-2}); an alternative proof was derived by DeMarco-Mavraki (see \cite{D-M-1}).
\begin{theorem}
\label{thm:main_00}
Let $C$ be a smooth projective curve defined over $\Qbar$, let $\pi:\mathcal{E}\lra C$ be an elliptic surface and let  $P_1,P_2\in E(\Qbar(C))$ be non-torsion points  on the generic fiber $E$ of $\mathcal{E}$. Then there exist infinitely many $\l\in C(\Qbar)$ such that for some nonzero integers $m_{1,\l},m_{2,\l}$, we have that $[m_{i,\l}](P_{i,\l})=0$ (for $i=1,2$)  on the $\l$-fiber $E_\l$ of $\pi$, if and only if  there exist nontrivial endomorphisms $\alpha_1,\alpha_2\in \End(E)$  such that $\alpha_1(P_1)=\alpha_2(P_2)$.
\end{theorem}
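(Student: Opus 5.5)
The statement is an equivalence, and the two directions are of very different difficulty. I will first derive the ``if'' direction from height considerations. For the ``only if'' direction I will follow the equidistribution route of DeMarco--Mavraki, keeping the Pila--Zannier counting strategy of Masser--Zannier in reserve.

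\emph{The ``if'' direction.} Suppose $\alpha_1(P_1)=\alpha_2(P_2)$ with $\alpha_1,\alpha_2$ nonzero. Each nonzero endomorphism has finite kernel, and this persists on all but finitely many smooth fibers $E_\l$. So for such $\l$, the point $P_{1,\l}$ is torsion exactly when $P_{2,\l}$ is torsion. It therefore suffices to produce infinitely many $\l$ at which $P_{1,\l}$ is torsion.

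For this I use the function $\l\mapsto \hat h_{E_\l}(P_{1,\l})$. By Tate and Silverman, it is a Weil height on $C$ associated to a $\Q$-divisor of degree $\hat h_E(P_1)>0$, up to an error $O(h(\l)^{1/2})$. In the language of Zhang and Chambert-Loir, it is the height of an adelically metrized line bundle of positive degree, namely $(\sigma_{P_1})^*$ of the N\'eron--Tate metrized bundle. The essential minimum of this height is $0$: by Zhang's inequality it is at least as small as the global height of $C$ with respect to this bundle, which is $0$ by the Tate limit construction. So there are infinitely many $\l$ with $\hat h(P_{1,\l})=0$, and at each of these $P_{1,\l}$ is torsion.

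An alternative for this step is to use the Betti map of $P_1$. This is a real-analytic map from a simply connected open set of $C(\C)$ to $\R^2/\Z^2$ with generic rank $2$, and it therefore takes rational values at infinitely many points.

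I need to flag a degeneracy here. If $\mathcal{E}$ is constant and $P_1$ is a constant non-torsion point, then $P_{1,\l}$ is never torsion. The positivity of $\hat h_E(P_1)$ therefore has to be read in the non-isotrivial sense, where non-torsion implies that the section is non-constant.

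\emph{The ``only if'' direction, equidistribution step.} Suppose $S\subset C(\Qbar)$ is infinite and both $P_{1,\l}$ and $P_{2,\l}$ are torsion for $\l\in S$. Then $S$ is a sequence of small points, in fact of height $0$, for both adelic heights $\hat h_{P_1}$ and $\hat h_{P_2}$. By the arithmetic equidistribution theorem of Yuan (with Thuillier at non-archimedean places), the Galois orbits of $S$ equidistribute, at every place $v$, toward both $\mu_{P_1,v}$ and $\mu_{P_2,v}$. Hence $\mu_{P_1,v}=\mu_{P_2,v}$ for all $v$.

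Since the local potentials are determined by their measures up to constants, and both heights vanish on $S$, I will deduce that the two heights are proportional:
\begin{equation*}
\hat h_{E_\l}(P_{1,\l})=c\,\hat h_{E_\l}(P_{2,\l})\quad\text{for all }\l, \qquad c>0.
\end{equation*}
The proportionality constant is read off from the degrees, giving $c=\hat h_E(P_1)/\hat h_E(P_2)$.

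\emph{Rigidity step and main obstacle.} The hard step is to turn this equality of heights, or already of archimedean measures, into an actual relation $\alpha_1(P_1)=\alpha_2(P_2)$. At the archimedean place, $\mu_{P_i}$ is the pullback of Haar measure under the Betti map $b_i\colon U\to\R^2/\Z^2$. My first attempt is to show that equality of the two pullback measures forces $b_1$ and $b_2$ to be related by a linear map in $\mathrm{GL}_2(\Q)$. The argument would use that the measures charge the bifurcation-type locus where the Betti maps are local diffeomorphisms, and that the real-analyticity and monodromy invariance of $b_2\circ b_1^{-1}$ (under $\Gamma\subset\mathrm{SL}_2(\Z)$ acting on the periods) force $b_2\circ b_1^{-1}$ to be affine with rational coefficients.

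Once the Betti coordinates satisfy a rational linear relation, I would translate it back into an algebraic relation between the sections. The tool is a functional Manin--Mumford / Ax--Schanuel statement for the elliptic logarithms of $P_1$ and $P_2$ over $\C(C)$, or equivalently Manin's theorem of the kernel. This yields $\alpha_1(P_1)=\alpha_2(P_2)$, where the $\alpha_i$ lie in $\End(E)$ (in $\OO$ exactly when the rational linear map commutes with the complex structure).

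If the measure-rigidity argument stalls, I will fall back on the original Masser--Zannier approach. That approach combines three ingredients: Pila--Wilkie counting of rational points of denominator $n$ on the graph of $(b_1,b_2)$; the lower bound $[\Q(\l):\Q]\gg n^{\delta}$ for torsion of exact order $n$, coming from Silverman's bounded height and David's degree bounds; and Andr\'e's functional transcendence to exclude semialgebraic arcs except under a relation $\alpha_1(P_1)=\alpha_2(P_2)$.
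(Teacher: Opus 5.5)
The paper does not prove this statement. It quotes it from Masser--Zannier and DeMarco--Mavraki and uses it as a black box, so the question is whether your outline closes on its own. It does for the ``if'' direction, but not for the ``only if'' direction.

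\textbf{The ``if'' direction.} Your Betti-map alternative works. It is exactly Lemma~\ref{lem:solutions} with target $0$, with the generic rank $2$ of the Betti map imported from Zannier's book in the non-isotrivial case, as in the proof of that lemma. You are right that a non-constancy hypothesis is needed. On a constant surface, take $P_1=P_2$ a constant non-torsion point: the relation holds with $\alpha_1=\alpha_2=1$, yet $P_{1,\l}$ is never torsion. So the statement as printed needs the proviso from Theorem~\ref{thm:main}. Your primary height argument, however, does not give what you claim. An essential minimum equal to $0$ only yields, for each $\epsilon>0$, infinitely many $\l$ with $\hat h_{E_\l}(P_{1,\l})<\epsilon$. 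It does not yield points of height exactly $0$, so it produces no torsion specializations.

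\textbf{The gap: the rigidity step of the ``only if'' direction.} You flag this step yourself, but the mechanism you propose cannot work.
\begin{itemize}
\item Equality of $b_1^*(\mathrm{Haar})$ and $c\,b_2^*(\mathrm{Haar})$ only says that $\phi=b_2\circ b_1^{-1}$ has constant Jacobian determinant.
\item $\phi$ is only a local germ. $b_1$ is neither injective nor single-valued on $C$, so there is no global monodromy-equivariant map to which an $\mathrm{SL}_2(\Z)$-rigidity argument could apply.
\item Proportional heights give the stronger fact that the two torsion loci coincide. Even this only says that $\phi$ locally preserves $\Q^2$. The map $(x,y)\mapsto(x,y+x^2)$ is real-analytic, has Jacobian determinant $1$, preserves $\Q^2$, and is not affine.
\end{itemize}

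So genuinely holomorphic input is needed. In the constant case it is easy to supply. The Betti density is a constant multiple of $|z_P'(t)|^2$ for a local holomorphic elliptic logarithm $z_P$. Hence $z_{P_1}'/z_{P_2}'$ has constant modulus, so it is constant, and monodromy places that constant in $\End(E_0)\otimes\Q$.

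In the non-isotrivial case, write $z_P=b_{P,1}+b_{P,2}\,\tau$ for the logarithm normalized by the lattice $\Z+\Z\tau(t)$. The density is then proportional to $|z_P'-b_{P,2}\,\tau'|^2/\mathrm{Im}\,\tau$. Here $z_P'-b_{P,2}\,\tau'$ is not holomorphic, because $b_{P,2}$ is a non-constant real-valued function. The constant-modulus argument is therefore unavailable. This is exactly where the difficulty of the theorem sits, and your proposal gives no argument there.

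\textbf{Two smaller points.}
\begin{itemize}
\item Passing from equal measures to proportional heights also needs the arithmetic Hodge index theorem (Faltings--Hriljac, Yuan--Zhang), together with $\bar L_{P_1}\cdot\bar L_{P_2}=0$ from equidistribution. This is what rules out $L_{P_1}-c\,L_{P_2}$ being a non-torsion degree-zero class. The remark that potentials are determined by their measures up to constants applies only to a single line bundle, so it does not cover this.
\item The Masser--Zannier fallback correctly lists the ingredients: Pila--Wilkie, Silverman's bounded height with David's degree bound, and Andr\'e's functional transcendence. But it is a summary of their proof, not a proof.
\end{itemize}

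As it stands, the hard direction rests on a citation, just as it does in the paper.
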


So, Theorem~\ref{thm:main_00} shows that the infinite occurrence of the \emph{unlikely} event that \emph{both} $P_{1,\l}$ and $P_{2,\l}$ are torsion on $E_\l$ can only be explained by a global dynamical relation between the points $P_1$ and $P_2$; such a conclusion fits the pattern behind several other unlikely intersection problems, going back to classical questions in arithmetic geometry (for more details, see \cite{Umberto}). We also note that several extensions of the results from \cite{M-Z-1,M-Z-2} were obtained both in the case of elliptic surfaces (see \cite{Lau-1}) and also in the general case of semiabelian schemes (see \cite{Lau-2, Lau-3}).

Our Theorem~\ref{thm:main} can be viewed as an extension of Theorem~\ref{thm:main_00} since we replace the \emph{target} $0\in E$ from Theorem~\ref{thm:main_00} with an arbitrary point $Q\in E(\Qbar(C))$. Also, we note that for \emph{one} starting point $P_1$ and a target point $Q$, it is known, as proved in \cite[Theorem~A2]{Betti-4} (see also Lemma~\ref{lem:solutions}), that there exist infinitely many integers $n$ such that $[n](P_{1,\l})=Q_\l$. However, the fact that for some point $\l\in C(\Qbar)$, we have that $Q_\l$ is a multiple \emph{both} of $P_{1,\l}$ and of $P_{2,\l}$ is an \emph{unlikely event}, which can only be explained by a  global dynamical relation between the starting points $P_1,P_2$ and the target point $Q$ (as in conclusions~(A)-(C) from Theorem~\ref{thm:main}).

\begin{remark}
\label{rem:torsion}
It makes sense to ask that $Q$ is non-torsion in our Theorem~\ref{thm:main} since otherwise (at the expense of replacing $Q$ by a suitable multiple), we recover the setting from Theorem~\ref{thm:main_00}. Furthermore, once we work with a non-torsion target point $Q$ in Theorem~\ref{thm:main}, it is automatic to consider that also the starting points $P_1,P_2\in E(\Qbar(C))$ are non-torsion. Indeed, if there exists a positive integer $\ell$ such that $[\ell](P_1)=0$, then the existence of infinitely many $\lambda\in C(\Qbar)$ such that for some suitable nonzero integers $m_{1,\l}$, we have $[m_{1,\l}](P_{1,\l})=Q_\l$ yields that also $[\ell](Q_\l)=0$ for those $\l$'s. So, by the Pigeonhole Principle, there exists some torsion point $R$ of $E$ (such that $[\ell](R)=0$) with the property that for infinitely many $\l\in C(\Qbar)$, we have $R_\l=Q_\l$. This means that $R=Q$, contradicting the assumption that $Q$ is non-torsion. Therefore, the assumption that all three points $P_1,P_2,Q$ are non-torsion is justified for Theorem~\ref{thm:main}.
\end{remark}

The results of Masser-Zannier \cite{M-Z-1, M-Z-2} (see Theorem~\ref{thm:main_00} above) were the catalysts for an intense study in arithmetic dynamics of unlikely intersection questions (see the survey article \cite{survey} for a more in-depth discussion of it as one of  the  central questions in arithmetic dynamics and also, see the book \cite{Umberto} for a thorough study of the origins of the unlikely intersection questions in arithmetic geometry). In particular, the so-called question of \emph{simultaneously preperiodic points} (see also the survey \cite{G-survey}) generated a great deal of research in recent years (see \cite{Matt, Matt-2, GHT-ANT, 1} and the references therein). So, consider a family $F_\l(z)$ of polynomials parameterized by the $\C$-points $\l$  on some projective smooth curve $C$; also, let $\mathbf{a},\mathbf{b}:C\lra\mathbb{P}^1$. Then one obtains (see \cite{F-G}) a complete characterization (based on the dynamics of $\mathbf{a},\mathbf{b}$ under the action of the family of maps $\{F_\l\}_{\l\in C}$) for the following unlikely scenario to happen: there exist infinitely many $\l\in C(\C)$ such that both $\mathbf{a}(\l)$ and $\mathbf{b}(\l)$ are preperiodic under the action of $F_\l(z)$.  The general principle behind this question involving simultaneously preperiodic points is that the aforementioned  infinite occurrence of the {unlikely event} that both the orbits of $\mathbf{a}(\l)$ and of $\mathbf{b}(\l)$ are finite under the action of $F_\l(z)$ is explained only due to a \emph{global dynamical relation} between the two starting points $\mathbf{a}$ and $\mathbf{b}$ under the action of $F_\eta(z)$, where $\eta$ is the generic point on $C$.

It is natural to extend the above study of unlikely intersections to the following scenario. With the above notation for the family of maps $F_\l$ and for the \emph{two starting points} $\mathbf{a},\mathbf{b}$, one considers also a \emph{target point} $\mathbf{c}:C\lra\mathbb{P}^1$ and then asks for a precise characterization of when the following unlikely scenario occurs: there exist infinitely many points $\l\in C(\C)$ such that $\mathbf{c}(\l)$ lives in the forward orbits of both $\mathbf{a}(\l)$ and of $\mathbf{b}(\l)$ under the action of $F_\l(z)$ (i.e., there exist positive integers $m$ and $n$ such that $F_\l^m(\mathbf{a}(\l))=\mathbf{c}(\l)$ and $F_\l^n(\mathbf{b}(\l))=\mathbf{c}(\l)$). This problem is often coined as the \emph{collision of orbits question} (see \cite{A-G, G-JNT}). Using the same strategy patented in \cite{Matt} (and further refined in the subsequent papers \cite{Matt-2, GHT-ANT}), one can obtain global conditions which are \emph{necessary} for this collision of orbits question; however, obtaining precise \emph{necessary and sufficient} conditions is much more delicate compared to the previous question of simultaneously preperiodic points (see the discussion from both \cite[Remark~2.8]{A-G} and \cite[Section~2]{G-JNT}). Our Theorem~\ref{thm:main} provides a complete answer to a variant of this collision of orbits question in the context of algebraic families of elliptic curves $E_\l$, this time replacing the orbit of a point under a polynomial (or rational) map with the orbit of the  corresponding point on $E_\l$ under the action of $\Z$, seen as a subring of $\End(E_\l)$.


\subsection{Previous results and further connections to our work}
\label{subsec:previous}
A variant of the problem of collision of orbits is the following question studied by Hsia and Tucker \cite{HT}. Given polynomials $\mathbf{a},\mathbf{b},\mathbf{c}\in\C[z]$ (whose compositional powers $\mathbf{a}^m,\mathbf{b}^n,\mathbf{c}^k$ for any positive integers $m,n,k$ are all distinct), then \cite[Theorem~1]{HT} establishes that there exist finitely many $\l\in\C$ such that
\begin{equation}
\label{eq:gcd}
(z-\l)\mid \gcd\left(\mathbf{a}^m(z)-\mathbf{c}(z),\mathbf{b}^n(z)-\mathbf{c}(z)\right) \text{ for some $m,n\in\N$.}
\end{equation}
So, at its heart, the problem of collision of orbits is a question regarding the greatest common divisors of certain polynomials (see also the discussion from \cite[Section~9]{A-G}). This has its origins in the problem studied by 
Ailon-Rudnick \cite{A-R}, who proved that if $\mathbf{a},\mathbf{b}\in \C[z]$ are multiplicatively independent, then there exists a nonzero polynomial $\mathbf{d}\in\C[z]$ such that 
$$\gcd\left(\mathbf{a}(z)^k-1,\mathbf{b}(z)^k-1\right) \mid \mathbf{d}(z)\text{ for all $k\in\N$,}$$
where $\mathbf{a}(z)^k,\mathbf{b}(z)^k$ are the (usual) $k$-th powers of the polynomials $\mathbf{a}(z),\mathbf{b}(z)$.
 
In turn, the result of Ailon-Rudnick was motivated by the work of Bugeaud-Corvaja-Zannier \cite{BCZ} who established an upper bound for $\gcd(a^k - 1,b^k - 1)$ (as $k$ varies in $\N$) for given $a,b\in\Q$. We also mention that this problem of bounding
the greatest common divisor has been studied in several other directions as well: for elements close to $S$-units (see \cite{C-Z-2, Luca}), for polynomials defined over fields of positive characteristic (see \cite{GHT-NJM}), and also, for elliptic divisibility sequences (see \cite{Silverman-3}). Furthermore, one of the open questions raised in \cite{Silverman-3} was answered in the affirmative in \cite[Theorem~1.1]{GHT-PJM} (stated above as  Theorem~\ref{thm:main_0}). Also, we mention extensions of the results from \cite{GHT-PJM} were obtained in the more general case of semiabelian schemes (see \cite{Lau-4}).


\subsection{Plan for our paper}

Most of the work for obtaining the direct implication in Theorem~\ref{thm:main} was done in \cite[Theorem~1.1]{GHT-PJM} (see Theorem~\ref{thm:main_0} above). However, in the case when the elliptic surface admits complex multiplication, we need to sharpen the statement of Theorem~\ref{thm:main_0}~(ii) to deliver conclusion~(C) in our Theorem~\ref{thm:main}. This part is done in Section~\ref{sec:direct} (see Proposition~\ref{prop:direct}).

We prove the converse implication from Theorem~\ref{thm:main} by considering separately each one of the three conditions~(A),~(B)~and~(C) (see Lemmas~\ref{lem:A}~and~\ref{lem:B} and Proposition~\ref{prop:converse_main}) and show that in each case there exist infinitely many points $\l\in C(\Qbar)$ such that $Q_\l$ is (an integer) multiple of both $P_{1,\l}$ and of $P_{2,\l}$. Once again, when the elliptic surface admits complex multiplication (i.e., condition~(C) holds), we need a more careful analysis; in particular, we need to employ some basic facts regarding the arithmetic of orders in an imaginary quadratic number field (for more details, see Section~\ref{sec:CM}).

\medskip

{\bf Acknowledgments.} First of all, we warmly thank  Umberto Zannier for his always generous sharing of beautiful mathematical ideas, which led to writing this paper. We are grateful to the anonymous referee for their useful comments and suggestions. We also thank Laura Capuano, Simone Coccia, Patrick Ingram and Matteo Verzobio for helpful conversations. The research of the first author was partially supported by a Discovery Grant from the NSERC.


\section{Arithmetic properties for an elliptic curve with complex multiplication}
\label{sec:CM}

Let $E$ be a $CM$ elliptic curve defined over a field $L$ of characteristic $0$;    let $\OO$ be its  endomorphism ring. We identify $\OO$ with an order in some imaginary quadratic number field; i.e, $\OO=\Z[\theta]$, where (for some $f\in\N$ and some square-free positive integer $D$) we have: 
\begin{equation}
\label{eq:theta}
\theta=\left\{\begin{array}{ccc}
f\cdot i\sqrt{D} & \text{ if } & D\not\equiv 3\pmod{4}\\
f\cdot \frac{1+i\sqrt{D}}{2} & \text{ if } & D\equiv 3\pmod{4}
\end{array}\right.
\end{equation}

We recall the definition of the norm $N(\cdot )$ in the quadratic number field $\Q(i\sqrt{D})$; in particular, for any $a+b\theta\in \OO$, we have 
\begin{equation}
\label{eq:norm}
N(a+b\theta)=(a+b\theta)\cdot (a+b\bar{\theta}),
\end{equation}
where $\bar{\gamma}$ is simply the complex conjugate of any $\gamma\in\Q(i\sqrt{D})$.

In our proof, we will employ few easy facts (see Lemmas~\ref{lem:arithm_1}~and~\ref{lem:arithm_2}) regarding the arithmetic of the ring $\OO$. 

\begin{lemma}
\label{lem:arithm_1}
With $D,\theta$ as in equation~\eqref{eq:theta}, let also $a$ be an odd integer. Then there exist $m,r,s\in\Z$ such that
\begin{equation}
\label{eq:30}
m-\theta=(a+4\theta)\cdot (r+s\theta).
\end{equation}
\end{lemma}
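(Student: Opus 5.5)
The plan is to reformulate~\eqref{eq:30} as a congruence in $\OO$. Since $\OO=\Z[\theta]$, every element of $\OO$ has the form $r+s\theta$ with $r,s\in\Z$. So the statement is equivalent to finding an integer $m$ with $m\equiv\theta$ modulo the principal ideal $(a+4\theta)\OO$. In other words, we must show that $\theta$ is congruent to a rational integer modulo $a+4\theta$.

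\textbf{Step 1: the norm is odd.} Set $N:=N(a+4\theta)=(a+4\theta)(a+4\bar\theta)$. Expanding gives
$$N=a^2+4a(\theta+\bar\theta)+16\theta\bar\theta.$$
Here $\theta+\bar\theta$ and $\theta\bar\theta$ are rational integers, being the trace and norm of the algebraic integer $\theta$; one checks this directly from~\eqref{eq:theta} in both cases. Since $a$ is odd, $N\equiv a^2\equiv 1\pmod 2$, so $N$ is odd. Also $\bar\theta=(\theta+\bar\theta)-\theta\in\OO$, and therefore $N\in(a+4\theta)\OO$.

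\textbf{Step 2: choose an inverse of $4$.} Because $N$ is odd, $\gcd(4,N)=1$, so there is an integer $u$ with $4u\equiv 1\pmod N$. Write $4u-1=Nt$ with $t\in\Z$.

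\textbf{Step 3: reduce $\theta$ to an integer.} Decompose
$$\theta=4u\theta-(4u-1)\theta=u(a+4\theta)-ua-Nt\theta.$$
The term $u(a+4\theta)$ lies in $(a+4\theta)\OO$. The term $Nt\theta$ also lies there, because $N\in(a+4\theta)\OO$ and $t\theta\in\OO$. Hence $\theta\equiv -ua \pmod{(a+4\theta)\OO}$.

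\textbf{Step 4: conclude.} Take $m:=-ua$. Then $m-\theta=(a+4\theta)\gamma$ for some $\gamma\in\OO$. Writing $\gamma=r+s\theta$ with $r,s\in\Z$ gives exactly~\eqref{eq:30}.

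The only point needing care is Step~1. We must verify that $\theta+\bar\theta$ and $\theta\bar\theta$ are integers in both cases of~\eqref{eq:theta}, so that $N$ is odd and $\bar\theta\in\OO$. This is clear: in the first case $\theta+\bar\theta=0$ and $\theta\bar\theta=f^2D$. In the second case $\theta+\bar\theta=f$ and $\theta\bar\theta=f^2(1+D)/4$, which is an integer because $D\equiv 3\pmod 4$. The rest of the argument is formal.
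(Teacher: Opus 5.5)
Your proof is correct, but it takes a different route from the paper's. The paper deduces the lemma from Proposition~\ref{prop:more_general}. Writing $\alpha=a+b\theta$ with $\gcd(a,b)=1$, it multiplies out $(a+b\theta)(x+y\theta)$ using the quadratic relation satisfied by $\theta$ in each case of \eqref{eq:theta}. It then asks that the $\theta$-coefficient equal $-1$. This is a linear equation $ay+bx'=\pm1$, where $x'$ is $x$ shifted by a multiple of $y$, and B\'ezout solves it because $\gcd(a,4)=1$.

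You apply B\'ezout to the pair $(4,N)$ instead of $(a,4)$, using that the rational integer $N=N(a+4\theta)$ lies in $(a+4\theta)\OO$. In other words, $\OO/(a+4\theta)\OO$ is a quotient of $\OO/N\OO$, in which $4$ is a unit, so $\theta\equiv -4^{-1}a$.

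What your route buys:
\begin{itemize}
\item It treats both shapes of $\theta$ uniformly. You only need $\theta+\bar\theta,\theta\bar\theta\in\Z$, never the explicit multiplication table.
\item It shows plainly why oddness of $a$ is the relevant hypothesis: it makes $N$ odd.
\item It produces explicit values $m=-ua$, $r=-u+4t\,\theta\bar\theta$, $s=ta$.
\end{itemize}

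The paper's route is set up to give the general Proposition~\ref{prop:more_general} in one stroke. Yours generalizes just as easily. For $\alpha=a+b\theta$ we have $N(\alpha)=a^2+ab(\theta+\bar\theta)+b^2\theta\bar\theta$, so a prime dividing $b$ divides $N(\alpha)$ exactly when it divides $a$. Hence $b$ is invertible modulo $N(\alpha)$ precisely when $\gcd(a,b)=1$, and the same decomposition $\theta=bu\theta-(bu-1)\theta$ goes through.
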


Lemma~\ref{lem:arithm_1} can be proven directly, but it is also an immediate  consequence of the following more general result; we warmly thank the referee for suggesting Proposition~\ref{prop:more_general}.

\begin{proposition}
\label{prop:more_general}
Let $\alpha\in \OO$ with the property that there exists no rational prime divisor of $\alpha$. Then the composite map $\Z\hookrightarrow\OO\to \OO/\alpha\cdot \OO$ is surjective.
\end{proposition}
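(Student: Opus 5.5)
The plan is to show that $\theta$ is congruent modulo $\alpha\OO$ to a rational integer. Since $\OO=\Z[\theta]$, every element $x+y\theta$ of $\OO$ would then be congruent modulo $\alpha$ to an integer, which is exactly the surjectivity of $\Z\to\OO/\alpha\OO$.

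First I record what the hypothesis gives. We have $\alpha\neq 0$, since $0$ is divisible by every rational prime. Write $\alpha=a+b\theta$ with $a,b\in\Z$. Then $\gcd(a,b)=1$: if a prime $p$ divided both $a$ and $b$, then $\alpha=p\cdot(a/p+(b/p)\theta)$ would be divisible by $p$ in $\OO$, contrary to the hypothesis. If $b=0$, then $\alpha=a$ has no prime divisor, so $a=\pm1$, the quotient $\OO/\alpha\OO$ is trivial, and there is nothing to prove. So I assume $b\neq0$.

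Next I use the norm. Let $N:=N(\alpha)=\alpha\bar\alpha$ as in \eqref{eq:norm}. Note that $\bar\alpha\in\OO$, because $\bar\theta=\mathrm{tr}(\theta)-\theta$ with $\mathrm{tr}(\theta)\in\Z$. Hence $N\in\alpha\OO\cap\Z$. Writing $\theta^2=t\theta-n$ with $t=\theta+\bar\theta\in\Z$ and $n=\theta\bar\theta\in\Z$, one gets
$$N=a^2+abt+b^2n.$$
The key step is to check that $\gcd(b,N)=1$. If a prime $p$ divides $b$, then $N\equiv a^2\pmod p$, and $p\nmid a$ because $\gcd(a,b)=1$. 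So $p\nmid N$.

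Now choose $c\in\Z$ with $bc\equiv1\pmod N$. Since $N\Z\subseteq\alpha\OO$, we get
$$\theta=bc\,\theta+(1-bc)\theta\equiv c\,(b\theta)\equiv c\,(\alpha-a)\equiv -ca \pmod{\alpha\OO}.$$
Therefore every $x+y\theta\in\OO$ satisfies $x+y\theta\equiv x-yca \pmod{\alpha}$, which gives the claimed surjectivity.

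I expect the only non-routine point to be the coprimality $\gcd(b,N(\alpha))=1$. It is what lets us invert $b$ modulo an integer lying in the ideal $\alpha\OO$, and it follows from the explicit norm formula together with $\gcd(a,b)=1$. Lemma~\ref{lem:arithm_1} then follows by applying the proposition to $\alpha=a+4\theta$. No rational prime divides it: a prime dividing $a+4\theta$ in $\OO=\Z[\theta]$ would have to divide both $a$ and $4$, which is impossible since $a$ is odd. Lifting $\theta$ to some integer $m$ then gives $m-\theta\in(a+4\theta)\OO$, which is \eqref{eq:30}.
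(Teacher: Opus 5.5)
Your proof is correct, but it takes a different route from the paper's. The paper solves $m-\theta=(a+b\theta)(x+y\theta)$ directly. It expands the product using the quadratic relation satisfied by $\theta$, treating the two shapes of $\theta$ in \eqref{eq:theta} separately. Matching $\theta$-coefficients then leaves a single linear Diophantine equation in $x,y$ of the form $ay+b(x+ty)=\pm1$, which Bézout for the coprime pair $(a,b)$ solves.

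You instead pass through the rational integer $N(\alpha)=\alpha\bar\alpha\in\alpha\OO$, show $\gcd(b,N(\alpha))=1$, and invert $b$ modulo $N(\alpha)$ to get $\theta\equiv -ca$. The paper's argument is slightly shorter and needs neither conjugation nor norms. Yours is uniform in $\theta$ (via $t=\theta+\bar\theta$ and $n=\theta\bar\theta$), which also sidesteps a harmless slip in \eqref{eq:301}, where the trace of $\theta$ should be $f$ rather than $2f$. It also gives more: an explicit integer lift of $\theta$, and the inclusion $N(\alpha)\Z\subseteq\ker(\Z\to\OO/\alpha\OO)$. Hence $\OO/\alpha\OO$ is cyclic of order dividing $N(\alpha)$, in fact of order exactly $N(\alpha)$ since $[\OO:\alpha\OO]=N(\alpha)$.
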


\begin{remark}
In Lemma~\ref{lem:arithm_1}, since $a$ is odd, we see that $\alpha=a+4\theta\in\OO$ satisfies the hypotheses of Proposition~\ref{prop:more_general} and therefore, there exists $m\in\ZZ$ such that $m-\theta\in \alpha\cdot \OO$, i.e., there exist also $r,s\in\ZZ$ (depending on $m$) such that equation~\eqref{eq:30} holds. 
\end{remark}

\begin{proof}[Proof of Proposition~\ref{prop:more_general}.]
The hypotheses of Proposition~\ref{prop:more_general} yields that there exist coprime integers $a$ and $b$ such that $\alpha=a+b\theta$. In order to ontain the desired conclusion in Proposition~\ref{prop:more_general}, it suffices to show there exist $m,x,y\in\ZZ$ such that
\begin{equation}
\label{eq:300}
m-\theta=(a+b\theta)\cdot (x+y\theta).
\end{equation} 
If $\theta=f\cdot i\sqrt{D}$ (see equation~\eqref{eq:theta}), then using that $\theta^2=-f^2D$ along with the fact that $\gcd(a,b)=1$, we immediately see that there exist suitable integers $m,x,y$ as in  equation~\eqref{eq:300}. 

Now, if $\theta=f\cdot \left(\frac{1+i\sqrt{D}}{2}\right)$ (see again equation~\eqref{eq:theta}), we obtain that 
\begin{equation}
\label{eq:301}
\theta^2-2f\theta+\frac{f^2(1+D)}{4}=0\text{ (note that $D\equiv 3\pmod{4}$ in this case).}
\end{equation}
Then there exist suitable integers $m,x,y$ as in equation~\eqref{eq:300}, as long as there exist integers $x$ and $y$ such that:
\begin{equation}
\label{eq:302}
1=ay+bx+2fby=ay+b\cdot (x+2fy).
\end{equation}
Since $\gcd(a,b)=1$, there exist integers $y_0$ and $x_0$ such that $1=ay_0+bx_0$ and therefore, letting $y=y_0$ and $x=x_0-2fy_0$, we see that equation~\eqref{eq:302} is verified (and thus, equation~\eqref{eq:300} holds). This concludes our proof of Proposition~\ref{prop:more_general}.
\end{proof}

Lemma~\ref{lem:arithm_1} allows us to derive the following easy consequence.
\begin{corollary}
\label{cor:action}
Let $a$ be an odd integer, let $Q$ be a torsion point of $E$ satisfying $[a+4\theta](Q)=0$ and let $U$ be the cyclic (finite) subgroup of $E$ generated by $Q$. Then for each  $\alpha\in\OO$, the following statements hold:
\begin{enumerate}
\item[(i)]  $[\alpha](Q)\in U$, i.e., $[\alpha]$ induces a natural group homomorphism $\widetilde{[\alpha]}$ on $U$ given by $\widetilde{[\alpha]}(Q)=[\alpha](Q)$.
\item[(ii)] if $\gcd\left(N\left(a+4\theta\right), N(\alpha)\right)=1$, then $\widetilde{[\alpha]}:U\lra U$ is a group automorphism.
\end{enumerate}
\end{corollary}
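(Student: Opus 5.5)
For part (i), the plan is to show first that $[\theta](Q)\in U$; since every $\alpha\in\OO$ has the form $x+y\theta$ with $x,y\in\Z$, this gives $[\alpha](Q)=[x](Q)+[y]([\theta](Q))\in U$. To get $[\theta](Q)\in U$ I would use Lemma~\ref{lem:arithm_1}: there are $m,r,s\in\Z$ with $m-\theta=(a+4\theta)(r+s\theta)$. Applying both sides to $Q$ and using $[a+4\theta](Q)=0$, together with the commutativity of $\OO$ (the composition of endomorphisms corresponds to the product in $\OO$), yields $[m](Q)-[\theta](Q)=[r+s\theta]\bigl([a+4\theta](Q)\bigr)=0$. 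Hence $[\theta](Q)=[m](Q)\in U$. Because $U$ is cyclic and generated by $Q$, the restriction of $[\alpha]$ to $U$ is a group homomorphism $U\to U$. It sends $[n](Q)$ to $[n]([\alpha](Q))$, so it is the map $\widetilde{[\alpha]}$ determined by $Q\mapsto[\alpha](Q)$.

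For part (ii), $U$ is finite, so it suffices to show that $\widetilde{[\alpha]}$ is injective. Let $n=|U|$, which is the order of $Q$. I would first show that $n$ divides $N(a+4\theta)$. Indeed, $[N(a+4\theta)](Q)=[\overline{(a+4\theta)}]\bigl([a+4\theta](Q)\bigr)=0$, because $N(a+4\theta)=(a+4\theta)\overline{(a+4\theta)}$ and $\bar\theta\in\OO$ (it equals $-\theta$ or $f-\theta$). So $n\mid N(a+4\theta)$. Now suppose $R=[k](Q)\in U$ satisfies $[\alpha](R)=0$. Then $[N(\alpha)](R)=[\bar\alpha]([\alpha](R))=0$, and therefore the order of $R$ divides $N(\alpha)$. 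The order of $R$ also divides $n$, which divides $N(a+4\theta)$. By the coprimality hypothesis the order of $R$ is $1$, so $R=0$. Thus $\widetilde{[\alpha]}$ is an injective endomorphism of a finite group, hence an automorphism.

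I do not expect a serious obstacle. The only point that needs care is the identification of the ring operations in $\OO$ with composition and addition of endomorphisms, and the fact that $\bar\alpha\in\OO$ for every $\alpha\in\OO$. Both are standard: $\OO$ is stable under complex conjugation, and $[\bar\alpha]$ is the dual isogeny of $[\alpha]$, so $[\bar\alpha]\circ[\alpha]=[N(\alpha)]$.
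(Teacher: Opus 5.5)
Your proof is correct and follows the paper's argument. Part (i) is identical: Lemma~\ref{lem:arithm_1} gives $[\theta](Q)=[m](Q)$. Part (ii) rests, as in the paper, on the factorization through $[N(\alpha)]$ together with the fact that $|U|$ divides $N(a+4\theta)$, which you justify where the paper only asserts it. The one cosmetic difference is that you get injectivity from $\ker\widetilde{[\alpha]}\subseteq\ker\widetilde{[N(\alpha)]}=0$, whereas the paper gets surjectivity from $\widetilde{[\alpha]}\circ\widetilde{[\bar\alpha]}=\widetilde{[N(\alpha)]}$ being an automorphism.
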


\begin{proof}
Using Lemma~\ref{lem:arithm_1}, we know there exist $m,r,s\in\Z$ such that 
\begin{equation}
\label{eq:41}
m-\theta=(a+4\theta)\cdot (r+s\theta).
\end{equation}
Equation~\eqref{eq:41} (along with the fact $[a+4\theta](Q)=0$) shows that $[\theta](Q)=[m](Q)$, i.e., $[\theta]$ induces the natural group homomorphism $\widetilde{[\theta]}$ on $U$ given by 
\begin{equation}
\label{eq:42}
\widetilde{[\theta]}(Q)=[m](Q).
\end{equation}
In particular, this means that writing $\alpha:=c+d\theta$ (for some $c,d\in\Z$), then $[\alpha]$ induces a group homomorphism $\widetilde{[\alpha]}$ on $U$. More precisely, equation~\eqref{eq:42} yields that 
\begin{equation}
\label{eq:43}
[\alpha](Q)=[c+dm](Q).
\end{equation}  
Equation~\eqref{eq:43} proves conclusion~(i) of Corollary~\ref{cor:action}.

Now, assuming that $\gcd\left(N(a+4\theta),N(\alpha)\right)=1$, we get that $[N(\alpha)]$ induces a natural group automorphism of the cyclic group $U$ (whose order divides $N(a+4\theta)$). Part~(i) shows that $[\alpha]$ and $[\bar{\alpha}]$  induce the group homomorphisms $\widetilde{[\alpha]}$ and $\widetilde{[\bar{\alpha}]}$ on $U$ for which
$$
\widetilde{[\alpha]}\circ \widetilde{[\bar{\alpha}]}=\widetilde{[N(\alpha)]} \text{ is a group automorphism for $U$.}$$
Therefore, $\widetilde{[\alpha]}$ is a group automorphism of $U$, as desired for part~(ii) of the conclusion in Corollary~\ref{cor:action}. 
\end{proof}

The next result shows that we can always find (infinitely many) elements in $\OO$ of the form $a+4\theta$ whose norms are coprime with a given positive integer. In particular, this will allow us to employ Corollary~\ref{cor:action} in our proof of Proposition~\ref{prop:converse_main} (which will deliver the converse implication in Theorem~\ref{thm:main} for $CM$ elliptic curves).

\begin{lemma}
\label{lem:arithm_2}
Let $M$ be a positive integer. Then there exists $\ell\in\{1,\dots, M\}$ such that for each integer $a\in\Z$ satisfying
\begin{equation}
\label{eq:congr}
a\equiv 2\ell-1\pmod{2M},
\end{equation}
we have $\gcd\left(N(a+4\theta),2M\right)=1$.
\end{lemma}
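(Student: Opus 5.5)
Write $g(a):=N(a+4\theta)$, which is a quadratic polynomial in $a$ with integer coefficients. Explicitly,
\[
g(a)=a^2+16f^2D \quad\text{if } D\not\equiv 3\pmod 4,\qquad
g(a)=a^2+4fa+4f^2(1+D) \quad\text{if } D\equiv 3\pmod 4 .
\]
The condition $\gcd(g(a),2M)=1$ only depends on $a$ modulo $2M$, since it only depends on $g(a)$ modulo each prime $p\mid 2M$. So the plan is to find a single odd residue class modulo $2M$ on which $g$ avoids divisibility by every prime $p\mid 2M$. Every odd residue class modulo $2M$ has the form $2\ell-1$ with $\ell\in\{1,\dots,M\}$, so such a class gives the required $\ell$.

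First consider $p=2$. For odd $a$, both formulas give $g(a)\equiv a^2\equiv 1\pmod 2$, because every term other than $a^2$ carries a factor $4$. So $2\nmid g(a)$ for every odd $a$.

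Next consider an odd prime $p\mid M$. The aim is a residue $a_p \bmod p$ with $p\nmid g(a_p)$. Since $g$ is monic of degree $2$, its reduction modulo $p$ is a nonzero polynomial with at most two roots in $\mathbb{F}_p$. For $p\ge 3$ there are at least three residues, so a non-root $a_p$ exists. This is the only step needing any thought, and the point is simply that $p\geq 3>2=\deg g$.

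Finally, glue the residues together with the Chinese Remainder Theorem. Write $2M=2^{e}\prod_{p} p^{e_p}$ over the odd primes $p\mid M$. Choose $a_0$ with $a_0\equiv 1\pmod{2^{e}}$ and $a_0\equiv a_p\pmod{p^{e_p}}$ for each such $p$. Then $a_0$ is odd, so $a_0\equiv 2\ell-1\pmod{2M}$ for a unique $\ell\in\{1,\dots,M\}$.

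For any $a\equiv a_0\pmod{2M}$, the integer $a$ is odd, so $2\nmid g(a)$. Also $a\equiv a_p\pmod p$ for each odd $p\mid M$, so $p\nmid g(a)$. Hence $\gcd(g(a),2M)=1$, which is the statement of the lemma.
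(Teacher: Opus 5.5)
Your proof is correct and takes essentially the same route as the paper: compute $N(a+4\theta)$ explicitly, choose for each prime $p\mid 2M$ a residue class on which $p\nmid N(a+4\theta)$ (the odd class for $p=2$), and glue these classes by the Chinese Remainder Theorem. For odd $p$ you use root counting (a monic quadratic has at most two roots in $\mathbb{F}_p$) in place of the paper's case split on whether $p\mid 2fD$, and you carry out the CRT step explicitly where the paper leaves it implicit.
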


\begin{proof}
We split our analysis based on the two cases appearing for $\theta$ as in the formulas~\eqref{eq:theta}. The case when $\theta=f\cdot i\sqrt{D}$ is easier than the case when  $\theta=f\cdot \frac{1+i\sqrt{D}}{2}$ (and $D\equiv 3\pmod{4}$); therefore, we will focus below on the latter case (especially, since the analysis for the former case is similar). So, using equation~\eqref{eq:norm}, we have that
\begin{equation}
\label{eq:norm_equation}
N\left(a+4\theta\right)=N\left(a+2f+i\cdot 2f\sqrt{D}\right)=(a+2f)^2+4f^2D.
\end{equation}
For each prime $p$ dividing $2M$, we have two cases, depending on whether $p\mid 2f\cdot D$, or not; either way, we can always find some residue class $r_p\in\{0,1,\dots,p-1\}$ such that if $a\equiv r_p\pmod{p}$, then $p\nmid N(a+4\theta)$ (see equation~\eqref{eq:norm_equation}). Noting that $r_2=1$ (i.e., $a$ has to be odd so that $N(a+4\theta)$ is odd), we obtain the desired conclusion from Lemma~\ref{lem:arithm_2}.    
\end{proof}


\section{Proof of the direct implication in Theorem~\ref{thm:main}}
\label{sec:direct}

In this Section, we prove the following result.
\begin{proposition}
\label{prop:direct}
Let $C$ be a smooth projective curve defined over $\Qbar$, let $\mathcal{E}\lra C$ be an elliptic surface and let  $P_1,P_2,  Q\in E(\Qbar(C))$ be non-torsion points on the generic fiber $E$ of $\pi$. Assume there exist infinitely many $\l\in C(\Qbar)$ such that for some nonzero integers $m_{1,\l},m_{2,\l}$, we have that $[m_{i,\l}](P_{i,\l})=Q_\l$ on the $\l$-fiber $E_\l$ of $\pi$. Then   at least one of the following three conditions must hold:
\begin{itemize}
\item[(A)] there exists a nonzero integer $k$ such that for some $i\in\{1,2\}$, we have $[k](P_i)=Q$ on $E$. 
\item[(B)] there exist nonzero integers $k_1$ and $k_2$ such that $[k_1](P_1)=[k_2](P_2)$ on $E$.
\item[(C)] $E$ has complex multiplication by some order $\OO$ in an imaginary quadratic number field and  there exist nonzero $\alpha_1,\alpha_2,\beta\in \OO$ with $\alpha_1/\alpha_2\notin\Q$ such that $[\alpha_1](P_1)=[\alpha_2](P_2)=[\beta](Q)$ on $E$.
\end{itemize}
\end{proposition}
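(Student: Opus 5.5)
We start from Theorem~\ref{thm:main_0}, which is available under exactly the hypotheses of Proposition~\ref{prop:direct}. If its conclusion~(i) holds, we have~(A) and there is nothing to prove. So assume~(i) fails and~(ii) holds: there are nonzero $\alpha_1,\alpha_2\in\End(E)$ with $[\alpha_1](P_1)=[\alpha_2](P_2)$. If $E$ has no CM, then $\End(E)=\Z$ and this is exactly~(B). In the CM case, if $\alpha_1/\alpha_2\in\Q$, we clear denominators by multiplying both $\alpha_i$ by a suitable nonzero integer. This gives $[k_1](P_1)=[k_2](P_2)$ with $k_i\in\Z\setminus\{0\}$, which is again~(B). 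So the remaining situation is: $E$ has CM by $\OO$, (A) and~(B) fail, and every relation $[\alpha_1](P_1)=[\alpha_2](P_2)$ has $\alpha_1/\alpha_2\notin\Q$. We must then produce $\beta\neq0$ with $[\alpha_1](P_1)=[\beta](Q)$, after possibly changing $\alpha_1,\alpha_2$.

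In this case $P_1$ and $P_2$ are $\OO$-linearly dependent, and the failure of~(B) forces the $\Z$-span of $P_1,P_2$ to have rank $2$ modulo torsion. Since $\OO\otimes\Q$ is $2$-dimensional over $\Q$, that span has finite index in the saturation of $\OO\cdot P_1$. We want $Q$ to lie in $\OO P_1\otimes\Q$ as well. Suppose it does not. Then $P_1$ and $Q$ are $\OO$-independent, so $P_1,P_2,Q$ span a group of $\Z$-rank $4$ modulo torsion; concretely $P_1,[\theta]P_1,Q,[\theta]Q$ are $\Z$-independent. This is where we reopen the argument behind Theorem~\ref{thm:main_0} (from \cite{GHT-PJM}), or use the Masser--Zannier/Barroero--Capuano type results for the fibered power of $\mathcal{E}$ \cite{Lau-1}. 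The condition $[m_{1,\l}]P_{1,\l}=Q_\l$ together with $[m_{2,\l}]P_{2,\l}=Q_\l$ gives, on each such fiber, two independent linear relations with integer coefficients among the four independent sections. Since $P_{2}$ is rationally an $\OO$-multiple of $P_1$, say $P_2\equiv\gamma P_1$ with $\gamma\notin\Q$, we get $[m_{1,\l}]P_{1,\l}=[m_{2,\l}\gamma]P_{1,\l}$ up to bounded torsion. This makes $P_{1,\l}$ torsion of order dividing $N(m_{1,\l}-m_{2,\l}\gamma)$, up to a bounded factor. Then $Q_\l$ is torsion too, so both $P_1$ and $Q$ specialize to torsion at infinitely many $\l$. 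By Theorem~\ref{thm:main_00} this forces an $\End(E)$-relation between $P_1$ and $Q$, contradicting independence.

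Hence $Q$ is in $\OO P_1\otimes\Q$. Clearing denominators, we get $[\beta](Q)=[\alpha](P_1)$ for some nonzero $\alpha,\beta\in\OO$. We then combine this with $[\alpha_1](P_1)=[\alpha_2](P_2)$ by multiplying through by common factors (replacing $\alpha_1$ by $\alpha\alpha_1$, $\alpha_2$ by $\alpha\alpha_2$, $\beta$ by $\beta\alpha_1$). This yields $[\alpha\alpha_1](P_1)=[\alpha\alpha_2](P_2)=[\beta\alpha_1](Q)$. Scaling by a common element doesn't change the ratio, so the new pair still satisfies $\alpha\alpha_1/\alpha\alpha_2=\alpha_1/\alpha_2\notin\Q$. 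This is~(C).

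The main obstacle is the middle step: ruling out $Q\notin\OO P_1\otimes\Q$. The reduction to torsion of $P_{1,\l}$ needs care, because the torsion ambiguity in $P_2\equiv\gamma P_1$ must be absorbed uniformly in $\l$. We would handle this by first replacing $P_1,P_2,Q$ by suitable multiples, or by passing to a finite cover of $C$ so that the relevant torsion sections are defined.
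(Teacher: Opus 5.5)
This is correct and is essentially the paper's argument. Since $\alpha_1/\alpha_2\notin\Q$, the element $m_{1,\l}\alpha_2-m_{2,\l}\alpha_1$ is nonzero, so $P_{i,\l}$ and hence $Q_\l$ are torsion at every collision parameter; Theorem~\ref{thm:main_00} then relates $P_i$ and $Q$, and multiplying through gives~(C). The paper runs this with $P_2$ after normalizing $\alpha_1\in\Z$, while you run it with $P_1$.

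The contradiction framing, the rank-$4$/fibered-power detour and the ``torsion ambiguity'' you flag as the main obstacle are all unnecessary. Applying $[\alpha_2]$ to the exact relation $[\alpha_1](P_1)=[\alpha_2](P_2)$ gives $[m_{1,\l}\alpha_2-m_{2,\l}\alpha_1](P_{1,\l})=0$ directly, with no torsion error term. One small fix: when $\alpha_1/\alpha_2\in\Q$, multiplying by an integer does not in general make the coefficients integers. You need to multiply by a suitable element of $\OO$, such as $\bar{\alpha_2}$, to reach~(B).
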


We note that Proposition~\ref{prop:direct} provides the direct implication in Theorem~\ref{thm:main} without the non-constant assumption for the corresponding sections $\sigma_{P_1},\sigma_{P_2},\sigma_Q$ (see also Remark~\ref{rem:constant}).

%
%

\begin{proof}[Proof of Proposition~\ref{prop:direct}.]
First, we note that we work under the hypotheses from Theorem~\ref{thm:main_0}. Therefore, if $E$ does not have complex multiplication, then  Theorem~\ref{thm:main_0} delivers alternatives~(A)~or~(B) from the conclusion of  Proposition~\ref{prop:direct}. 

So, from now on, we assume that $E$ has complex multiplication by some order $\OO$ in an imaginary quadratic field. As always, we identify $\End(E)$ with $\OO$ and for each $\gamma\in\OO$, we denote by $[\gamma]$ the corresponding endomorphism of $E$.

Now, if  alternatives~(A)-(B) do not hold in Proposition~\ref{prop:direct}, then we may assume from now on (due to Theorem~\ref{thm:main_0}~(ii)) that there exist nontrivial endomorphisms $\alpha_1,\alpha_2\in\OO$ such that
\begin{equation}
\label{eq:1}
[\alpha_1](P_1)=[\alpha_2](P_2).
\end{equation}
We may also assume that $\frac{\alpha_1}{\alpha_2}\notin\Q$ since otherwise (after multiplying $\alpha_1$ and $\alpha_2$ in equation~\eqref{eq:1} by a suitable nonzero $\gamma\in\OO$) we may assume $\alpha_1$ and $\alpha_2$ are nonzero integers and therefore, conclusion~(B) would be met. Furthermore, again multiplying by a suitable nonzero $\gamma\in\End(E)$, we may assume that $\alpha_1$ is a nonzero integer, while $\alpha_2\notin \Z$.
\begin{lemma}
\label{lem:not_both_integers}
Assume $\alpha_1$ is a nonzero integer and $\alpha_2\notin\Z$. Then there exist nonzero $\gamma,\beta\in\OO$ such that $[\gamma\alpha_1](P_1)=[\gamma\alpha_2](P_2)=[\beta](Q)$.
\end{lemma}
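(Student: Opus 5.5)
The plan is to use the hypothesis of Proposition~\ref{prop:direct} to show that $P_{2,\l}$ and $Q_\l$ are \emph{simultaneously torsion} for infinitely many $\l$. Once that is established, Theorem~\ref{thm:main_00} (Masser--Zannier) gives a relation between $P_2$ and $Q$, and combining it with equation~\eqref{eq:1} gives the relation claimed in the lemma.

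\textbf{Step 1: $P_{2,\l}$ is torsion.} Fix one of the infinitely many $\l$ (avoiding the finitely many singular fibers and the finitely many $\l$ where the specializations of~\eqref{eq:1} might fail). At such a $\l$ we have
$$[m_{1,\l}](P_{1,\l})=Q_\l=[m_{2,\l}](P_{2,\l}).$$
Apply $[\alpha_1]$ to $Q_\l$ in two ways. Since $\alpha_1\in\Z$ commutes with every endomorphism, the first expression and~\eqref{eq:1} give
$$[\alpha_1](Q_\l)=[m_{1,\l}]\,[\alpha_1](P_{1,\l})=[m_{1,\l}\alpha_2](P_{2,\l}).$$
The second expression gives
$$[\alpha_1](Q_\l)=[\alpha_1 m_{2,\l}](P_{2,\l}).$$
Subtracting, we get $[m_{1,\l}\alpha_2-\alpha_1m_{2,\l}](P_{2,\l})=0$. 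This endomorphism is nonzero: $\alpha_1m_{2,\l}\in\Z$, while $m_{1,\l}\alpha_2\notin\Z$ because $m_{1,\l}\neq 0$ and $\alpha_2\notin\Z$. A nonzero endomorphism has finite kernel, so $P_{2,\l}$ is torsion.

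\textbf{Step 2: $Q_\l$ is torsion.} This is immediate, since $Q_\l=[m_{2,\l}](P_{2,\l})$.

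\textbf{Step 3: a relation between $P_2$ and $Q$.} Steps~1 and~2 hold for infinitely many $\l\in C(\Qbar)$, and both $P_2$ and $Q$ are non-torsion. So Theorem~\ref{thm:main_00}, applied to the pair $(P_2,Q)$, gives nontrivial (i.e. nonzero) $\delta_1,\delta_2\in\OO$ with
$$[\delta_1](P_2)=[\delta_2](Q).$$

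\textbf{Step 4: conclusion.} Take $\gamma:=\delta_1$ and $\beta:=\alpha_2\delta_2$. Both are nonzero, since $\OO$ is a domain and $\alpha_2\neq 0$. Multiplying~\eqref{eq:1} by $\gamma$ and using commutativity of $\OO$,
$$[\gamma\alpha_1](P_1)=[\gamma\alpha_2](P_2)=[\alpha_2]\,[\delta_1](P_2)=[\alpha_2]\,[\delta_2](Q)=[\beta](Q),$$
which is the assertion of the lemma.

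The only point needing care is the nonvanishing of $m_{1,\l}\alpha_2-\alpha_1m_{2,\l}$ in Step~1. It is exactly here that the hypothesis $\alpha_2\notin\Z$ (equivalently $\alpha_1/\alpha_2\notin\Q$) is used. Everything else is a direct appeal to Theorem~\ref{thm:main_00}.
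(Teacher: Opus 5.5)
Your proposal is correct and matches the paper's proof step for step. You combine the specialization of~\eqref{eq:1} with the collision relations to get $[m_{1,\l}\alpha_2-\alpha_1m_{2,\l}](P_{2,\l})=0$ with a nonzero endomorphism; the paper phrases this nonvanishing as $\alpha_2\notin\Q$, which follows from $\alpha_2\notin\Z$ because $\OO\cap\Q=\Z$. You then deduce that $P_{2,\l}$ and $Q_\l$ are simultaneously torsion, apply Theorem~\ref{thm:main_00} to $(P_2,Q)$, and take $\gamma=\delta_1$, $\beta=\alpha_2\delta_2$, exactly as the paper does.
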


\begin{proof}[Proof of Lemma~\ref{lem:not_both_integers}.]
According to the hypotheses of Proposition~\ref{prop:direct}, there exist infinitely many $\l\in C(\Qbar)$ and there exist nonzero integers $n_{i,\l}$ (for $i=1,2$) such that 
\begin{equation}
\label{eq:20}
\left[n_{1,\l}\right]\left(P_{1,\l}\right) =\left[n_{2,\l}\right]\left(P_{2,\l}\right)=Q_\l. 
\end{equation}
In particular, we get
\begin{equation}
\label{eq:2}
\left[n_{1,\l}\cdot \alpha_1\right]\left(P_{1,\l}\right)=\left[n_{2,\l}\cdot \alpha_1\right]\left(P_{2,\l}\right).
\end{equation}
On the other hand, specializing the relation from equation~\eqref{eq:1} on the fiber corresponding to the point $\l\in C(\Qbar)$ (and then multiplying by $n_{1,\l}$) yields
\begin{equation}
\label{eq:3}
\left[n_{1,\l}\cdot \alpha_1\right]\left(P_{1,\l}\right)= \left[n_{1,\l}\cdot \alpha_2\right]\left(P_{2,\l}\right).
\end{equation}
Combining equations~\eqref{eq:2}~and~\eqref{eq:3} yields 
\begin{equation}
\label{eq:4}
\left[n_{2,\l}\cdot \alpha_1-n_{1,\l}\cdot \alpha_2\right]\left(P_{2,\l}\right)=0\text{ (on the corresponding fiber $E_\l$).}
\end{equation}
Now, because $\alpha_2\notin\Q$ (and $n_{1,\l}\ne 0$, while $\alpha_1,n_{2,\l}\in\Z$), we obtain that 
$$m_\l:=n_{2,\l}\cdot \alpha_1-n_{1,\l}\cdot \alpha_2\in\OO\text{ is nonzero,}$$  i.e., the corresponding endomorphism $[m_\l]$ of $E_\l$ is dominant. Therefore, equation~\eqref{eq:4} yields that $P_{2,\l}$ is a torsion point for the elliptic curve $E_\l$. Then knowing that $[n_{2,\l}](P_{2,\l})=Q_{\l}$ (see equation~\eqref{eq:20}) yields that also $Q_{\l}$ is a torsion point for $E_\l$. 

Now, the fact that there exist infinitely many points $\l\in C(\Qbar)$ such that both $P_{2,\l}$ and $Q_\l$ are torsion points for the elliptic curve $E_\l$ yields (see Theorem~\ref{thm:main_00}) that there exist nonzero  $\gamma,\delta\in \OO$ such that
\begin{equation}
\label{eq:5}
[\gamma](P_2)=[\delta](Q).
\end{equation}
Combining equations~\eqref{eq:5}~and~\eqref{eq:1} yields the desired conclusion~(C) from Proposition~\ref{prop:direct}:
$$\left[\alpha_1\gamma\right](P_1)=\left[\alpha_2\gamma\right](P_2) =[\alpha_2\delta](Q).$$
This concludes our proof of Lemma~\ref{lem:not_both_integers}.
\end{proof}

Lemma~\ref{lem:not_both_integers} finishes our proof of Proposition~\ref{prop:direct} by delivering the desired conclusion~(C).
\end{proof}


\section{Proof of the converse implication in Theorem~\ref{thm:main}}
\label{sec:converse}

In this Section, we prove the following main result.
\begin{proposition}
\label{prop:converse}
Let $C$ be a smooth projective curve defined over $\Qbar$, let $\mathcal{E}\lra C$ be an elliptic surface and let  $P_1,P_2,  Q\in E(\Qbar(C))$ be non-torsion points on the generic fiber $E$ of $\pi$.  Assume that  at least one of the following three conditions holds:
\begin{itemize}
\item[(A)] there exists a nonzero integer $k$ such that for some $i\in\{1,2\}$, we have $[k](P_i)=Q$ on $E$. 
\item[(B)] there exist nonzero integers $k_1$ and $k_2$ such that $[k_1](P_1)=[k_2](P_2)$ on $E$.
\item[(C)] $E$ has complex multiplication by some order $\OO$ in an imaginary quadratic number field and  there exist nonzero $\alpha_1,\alpha_2,\beta\in \OO$ with $\alpha_1/\alpha_2\notin\Q$ such that $[\alpha_1](P_1)=[\alpha_2](P_2)=[\beta](Q)$ on $E$.
\end{itemize}
Furthermore, assume that if $E$ is isotrivial, then not all three sections $\sigma_{P_1},\sigma_{P_2},\sigma_Q$ are constant. 
Then there exist infinitely many points $\l\in C(\Qbar)$ such that for some nonzero integers $m_{1,\l}$ and $m_{2,\l}$, we have
\begin{equation}
\label{eq:100}
\left[m_{1,\l}\right]\left(P_{1,\l}\right)=\left[m_{2,\l}\right]\left(P_{2,\l} \right)=Q_\l.
\end{equation}
\end{proposition}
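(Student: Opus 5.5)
The plan is to treat the three hypotheses (A), (B), (C) separately and, in every case, to reduce to the one–starting–point result quoted in the introduction (\cite[Theorem~A2]{Betti-4}, Lemma~\ref{lem:solutions}). That result gives, for a non-torsion $R$ and a target $Q$ (not both constant in the isotrivial case), infinitely many $\l$ with $[n_\l](R_\l)=Q_\l$. One point must be checked each time: $n_\l\neq 0$ except for finitely many of these $\l$. I would secure this either from the precise form of Lemma~\ref{lem:solutions} or by discarding the $\l$ with $Q_\l=0$. If needed, I would also use the freedom to pick $n_\l$ in a fixed residue class.

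\emph{Case (A).} Say $[k](P_1)=Q$. Apply Lemma~\ref{lem:solutions} to the pair $(P_2,Q)$ to get infinitely many $\l$ with $[n_\l](P_{2,\l})=Q_\l$ and $n_\l\ne0$. Then $m_{1,\l}=k$ and $m_{2,\l}=n_\l$ satisfy \eqref{eq:100}. In the isotrivial case we must check that $P_2$ and $Q$ are not both constant. If they were, then $[k](P_1)=Q$ constant forces $P_1-P_0$ to be torsion for some constant $P_0$. Hence $P_1$ is constant, because torsion points of $E_0\times C$ are constant, and this contradicts the hypothesis.

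\emph{Case (B).} Suppose $[k_1](P_1)=[k_2](P_2)$. Apply Lemma~\ref{lem:solutions} to $R:=[k_1](P_1)$, which is non-torsion, and to $Q$. For the resulting $\l$ we have
\[
Q_\l=[n_\l k_1](P_{1,\l})=[n_\l](R_\l)=[n_\l k_2](P_{2,\l}),
\]
so we may take $m_{1,\l}=n_\l k_1$ and $m_{2,\l}=n_\l k_2$. The non-constancy check is as in Case (A): if $R$ and $Q$ were both constant, then $P_1$ and $P_2$ would be constant up to torsion, hence constant.

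\emph{Case (C).} This is the real content, and here the arithmetic of Section~\ref{sec:CM} enters. The relation $[\alpha_1](P_1)=[\alpha_2](P_2)=[\beta](Q)$ says that $P_1,P_2,Q$ generate an $\OO$-submodule of rank one modulo torsion. So after fixing a suitable $S\in E(\Qbar(C))$ and a positive integer $N$, we can write
\[
[N](P_1)=[u_1](S),\qquad [N](P_2)=[u_2](S),\qquad [N](Q)=[v](S),
\]
with $u_1,u_2,v\in\OO$ nonzero. The strategy is to find infinitely many $\l$ at which $S_\l$ is a torsion point satisfying $[a+4\theta](S_\l)=0$, where $a\equiv 2\ell-1\pmod{2M}$ is chosen by Lemma~\ref{lem:arithm_2}. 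Here $M$ is divisible by $N$ and by the norms $N(u_1)$, $N(u_2)$, $N(v)$ of the elements involved. For such $\l$, Corollary~\ref{cor:action} shows that every $[\gamma]$ with $\gamma\in\OO$ acts on the cyclic group $U=\langle S_\l\rangle$ as multiplication by an integer, and that $[u_1]$, $[u_2]$, $[v]$ act as automorphisms. Consequently $[u_1](S_\l)$, $[u_2](S_\l)$ and $[v](S_\l)$ are integer multiples of one another, each being a generator of $U$. Then $[N](Q_\l)$ is a nonzero integer multiple of $[N](P_{i,\l})$. Since multiplication by $[N]$ is invertible on points of order coprime to $N$, we can strip $N$ and obtain \eqref{eq:100}. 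The existence of infinitely many such $\l$ comes from varying $a$ in the arithmetic progression. For each such $a$ there is at least one $\l$ with $[a+4\theta](S_\l)=0$, since $\l\mapsto[a+4\theta](S_\l)$ is a non-constant section meeting the zero section. As $N(a+4\theta)\to\infty$, these $\l$ cannot repeat infinitely often: $S$ is non-torsion, so the order of $S_\l$ is bounded at any single $\l$.

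I expect the main obstacle to be the last reduction in Case (C). The specialization $S_\l$ lies in $E_\l[a+4\theta]$, but the points $P_{i,\l}$ and $Q_\l$ are only determined by $S_\l$ up to $N$-torsion. A stray component in $E_\l[N]$ could prevent $Q_\l$ from lying in $\langle P_{i,\l}\rangle$. My first attempt is to avoid dividing by $N$ altogether. I would pick $S$ so that $P_1=[u_1](S)+T_1$, $P_2=[u_2](S)+T_2$ and $Q=[v](S)+T_3$ with $T_j$ torsion, after a finite base change of $C$. I would then enlarge $M$ to absorb the orders of the $T_j$ and choose the $\l$ so that the whole group generated by $S_\l,T_{1,\l},T_{2,\l},T_{3,\l}$ is cyclic of order coprime to the orders involved. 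If that fails, the fallback is to restrict to those $\l$ where $S_\l$ has exact $\OO$-annihilator $a+4\theta$ and to work in $E_\l[N(a+4\theta)]$ directly. One must also make sure that the constructed $\l$ avoid the finitely many bad fibers, and in the isotrivial case that $S$ is non-constant, which follows from the hypothesis as in Cases (A) and (B).
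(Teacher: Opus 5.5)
Your treatment of (A) and (B) is correct and matches the paper (Lemmas~\ref{lem:A} and~\ref{lem:B}). The gap is in Case~(C), at the step you flagged yourself, and it breaks the argument rather than being a technicality.

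From $[N](P_{i,\l})=[u_i](S_\l)$ and $[N](Q_\l)=[v](S_\l)$, with $S_\l$ of order prime to $N$, you only learn this: $P_{i,\l}$ and $Q_\l$ are each a point of order prime to $N$ plus a point of $E_\l[N]$. Nothing relates these $E_\l[N]$-components. So invertibility of $[N]$ on the prime-to-$N$ part cannot give $Q_\l\in\langle P_{i,\l}\rangle$.

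Concretely, let $S$ be non-constant, let $T$ be a nonzero constant $2$-torsion point, and set $P_1=S$, $P_2=[\theta](S)$, $Q=S+T$. Then (C) holds with $\alpha_1=\beta=2\theta$ and $\alpha_2=2$, while (A) and (B) fail. Your normalization holds with this $S$, $N=2$, $u_1=v=2$, $u_2=2\theta$. But at every $\l$ your construction produces, $P_{1,\l}=S_\l$ and $P_{2,\l}=[\theta](S_\l)$ are killed by the odd integer $N(a+4\theta)$, whereas $Q_\l=S_\l+T$ has even order. So $Q_\l$ lies in neither cyclic group, and the construction yields no good $\l$ at all.

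Your first remedy cannot repair this. The torsion sections $T_j$ of the constant surface are constant, so $T_{j,\l}=T_j$, and no group containing them has order prime to their orders unless all $T_j=0$. The fallback (exact annihilator $a+4\theta$ for $S_\l$) does not touch the $E_\l[N]$-components, which is where the problem lives.

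The missing observation is an asymmetry in the problem. You may multiply the starting points $P_i$ by integers for free, since $Q_\l\in\langle [n](P_{i,\l})\rangle$ implies $Q_\l\in\langle P_{i,\l}\rangle$. But you must never multiply the target. So take $Q$ itself as the base point. Multiplying $[\alpha_i](P_i)=[\beta](Q)$ by $\bar{\alpha_i}$ gives the exact relation $[N(\alpha_i)](P_i)=[\bar{\alpha_i}\beta](Q)$, with no torsion ambiguity. Take $M=N(\alpha_1)N(\alpha_2)N(\beta)$ and $\l$ with $[a+4\theta](Q_\l)=0$. Corollary~\ref{cor:action} makes $\bar{\alpha_i}$ and $\beta$ act as automorphisms of $U_\l=\langle Q_\l\rangle$. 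Hence $[N(\alpha_i)](P_{i,\l})$ generates $U_\l$, and $Q_\l=[m_{i,\l}](P_{i,\l})$ for a nonzero integer $m_{i,\l}$. This is the paper's Lemma~\ref{lem:multiples}.

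Separately, your way of producing the $\l$ (a non-constant map $C\to E_0$ is surjective) is a nice elementary substitute for Lemma~\ref{lem:solutions_2}, which the paper derives from the Betti-coordinate argument of Lemma~\ref{lem:solutions}. However, your distinctness argument is not right as stated. A single $\l$ satisfies $[a+4\theta](Q_\l)=0$ for infinitely many $a$ in the progression once its annihilator ideal contains $a_0+4\theta$ together with the order of $Q_\l$. Instead, choose $\l$ with $Q_\l$ of maximal order in $E_0[a+4\theta]$. This group has $N(a+4\theta)$ elements and sits inside $E_0[N(a+4\theta)]$, so that order is at least $\sqrt{N(a+4\theta)}\to\infty$. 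This forces infinitely many distinct $\l$.
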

Combining Propositions~\ref{prop:converse}~and~\ref{prop:direct}, we obtain the full result in Theorem~\ref{thm:main}.

We will prove Proposition~\ref{prop:converse} as a consequence of three other results: Lemmas~\ref{lem:A}~and~\ref{lem:B}, along with Proposition~\ref{prop:converse_main}; in each one of these three results, we  assume one of the conditions~(A)-(C) hold and then obtain the desired conclusion from equation~\eqref{eq:100}. 
We start by stating in Lemma~\ref{lem:solutions} a fact (proven in \cite[Theorem~A2]{Betti-4}), which will be used in our proofs.


\subsection{A useful result}
\label{subsec:very_useful}

We thank once again Umberto Zannier for explaining to us how to derive Lemma~\ref{lem:solutions} as a consequence of \cite[Theorem~A2]{Betti-4}.

\begin{lemma}
\label{lem:solutions}
Let $C$ be a smooth projective curve defined over $\Qbar$, let $\mathcal{E}\lra C$ be an elliptic surface and let  sections $\sigma_{P},\sigma_{Q}$ (corresponding to points $P,  Q$ of the generic fiber $E$) of $\mathcal{E}$; furthermore, assume $P$ is a non-torsion point of $E(\Qbar(C))$.  Also, if $\mathcal{E}$ is isotrivial, assume that not both sections $\sigma_P$ and $\sigma_Q$ are constant.

Then there exist infinitely many $\l\in C(\Qbar)$ such that for some nonzero integer $n_\l$, we have that  $[n_\l]\left(P_{\l}\right)=Q_{\l}$. 
\end{lemma}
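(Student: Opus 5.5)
The plan is to derive Lemma~\ref{lem:solutions} from the result of Corvaja--Masser--Zannier type stated in \cite[Theorem~A2]{Betti-4}. As I read that theorem, for a non-torsion section $P$ and an arbitrary section $Q$ of a non-isotrivial elliptic surface (or an isotrivial one with a non-constant section involved), the set of $\l\in C(\Qbar)$ at which $Q_\l$ lies in the $\Z$-span of $P_\l$ together with the torsion is infinite. More precisely, I expect it to say that the curve $\{(P_\l,Q_\l)\}$ meets infinitely often the union over $n$ of the ``translated torsion'' subvarieties $\{(x,y): y=[n]x\}$ inside the fibered square $\mathcal{E}\times_C\mathcal{E}$. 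So the first step is to translate the statement into this form: we look for $\l$ with $Q_\l-[n](P_\l)=0$, that is, zeros of the sections $\sigma_{Q-[n]P}$ meeting the zero section. Then we show that infinitely many distinct $\l$ arise as $n$ varies.

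For a fixed $n$ with $Q\neq[n]P$, the section $\sigma_{Q-[n]P}$ meets the zero section in finitely many points. Their number, counted with multiplicity, equals the intersection number $(\sigma_{Q-[n]P}\cdot O)$. By the theory of the canonical height on elliptic surfaces (Shioda--Tate), this quantity is $\chi+\tfrac12\bigl(\hat h(Q-[n]P)-\text{local correction terms}\bigr)$. The correction terms are bounded independently of $n$, while $\hat h(Q-[n]P)\sim n^2\hat h(P)\to\infty$, since $P$ is non-torsion and hence $\hat h(P)>0$ in the non-isotrivial case. Hence for large $|n|$ there is at least one such $\l$, and for each $n$ we obtain $[n](P_\l)=Q_\l$ at some point of $C$. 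Moreover, at most one $n$ can be degenerate with $Q=[n]P$ identically, and that case also gives infinitely many $\l$ trivially.

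The main obstacle is showing that these $\l$ are infinitely many distinct points. One must also ensure they are not the finitely many bad points: singular fibers, or points where $P_\l$ is torsion, where a single $\l$ could serve infinitely many $n$. To handle this, I would argue that a fixed $\l$ with $P_\l$ non-torsion satisfies $[n]P_\l=Q_\l$ for at most one $n$. For $\l$ with $P_\l$ torsion, there are only finitely many such $\l$ of bounded height, though in general there may be infinitely many. So instead I would bound the local intersection multiplicity of $\sigma_{Q-[n]P}$ with $O$ at any fixed point by $O(\log|n|)$ or by $O(1)$, using the local height functions. This shows that finitely many points cannot absorb a quadratically growing intersection number, which forces infinitely many distinct $\l$.

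In the isotrivial case ($\mathcal{E}=E_0\times C$, with at least one of $P,Q$ non-constant), the same argument applies with degrees of the maps $C\to E_0$ in place of heights. The degree of $Q-[n]P$ grows like $n^2$ whenever $P$ is non-constant. If $P$ is constant and non-torsion while $Q$ is non-constant, then $Q:C\to E_0$ is surjective, so each $[n]P$ is attained by $Q$ at some $\l_n$. These points are distinct because the values $[n]P$ are distinct. Since the integers $n$ used can be taken nonzero, this gives the required nonzero $n_\l$.
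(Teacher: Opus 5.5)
Your route is genuinely different from the paper's and can be made to work. As written, though, it has a gap exactly at the step you flag as the main obstacle.

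The paper never counts intersection numbers. It writes $\log_\lambda(P_\lambda)$ and $\log_\lambda(Q_\lambda)$ in Betti coordinates $A(\lambda),B(\lambda)$ on a small disc $D$. It then uses that $A$ has Jacobian of rank $2$ (\cite[Chapter~3]{Umberto} in the non-isotrivial case, non-constancy of $P$ in the constant case). From this it deduces that for every large $n$ the image of $F_n=A-B/n$ contains a fixed open set $V\subset\R^2$. Each of the $\gg n^2$ points of $\frac1n\Z^2\cap V$ then gives a \emph{distinct} $\lambda\in D$ with $[n](P_\lambda)=Q_\lambda$, so distinctness comes for free.

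In your argument, the quadratic growth of $(\sigma_{Q-[n]P}\cdot O)$, or of $\deg(Q-[n]P)$ in the constant case, is fine. All the content sits in the claim that the local intersection multiplicity at a fixed $\lambda_0$ stays bounded as $n$ varies, and you only announce it (``$O(\log|n|)$ or $O(1)$, using the local height functions''). That phrase does not supply an argument. A point $\lambda_0$ where $P_{\lambda_0}$ is torsion serves a whole arithmetic progression of $n$, and you need a reason why the multiplicities there do not grow with $n$.

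The missing idea is that the residue fields are $\Qbar$, which has characteristic $0$. Here is how to use it.
\begin{enumerate}
\item Let $v$ be the valuation at $\lambda_0$ and $E_1$ the kernel of reduction; this also covers singular fibres. The set of $n$ with $[n]P-Q\in E_1$ is empty, a single integer, or a coset $n_0+m\Z$ with $m\ge 1$ and $[m]P\in E_1$.
\item In residue characteristic $0$, the formal logarithm is a valuation-preserving isomorphism $\hat E(t\Qbar[[t]])\cong t\Qbar[[t]]$.
\item Hence in the coset case the multiplicity at $n=n_0+km$ equals $v(L_0+kL_1)$, where $L_0=\log([n_0]P-Q)$ and $L_1=\log([m]P)\neq 0$ (since $P$ is non-torsion).
\item This valuation is at most $v(L_1)$ for all but at most one $k$, namely the one where leading coefficients cancel.
\end{enumerate}
So the local multiplicity is bounded independently of $n$. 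A finite set of points, together with the bad fibres, therefore cannot carry a total that grows quadratically in $n$, and you get infinitely many $\lambda$. Over a number field one would instead pick up a $v_p(n)$ term, which is presumably where your $O(\log|n|)$ intuition comes from.

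With this supplied, your proof is purely algebraic and avoids the rank-$2$ theorem for the Betti map, which is the deep transcendental input in the paper. The paper's argument gives more in return: $\gg n^2$ distinct $\lambda$ for every large $n$, not just infinitely many overall.

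Minor points:
\begin{enumerate}
\item Your first paragraph announces a derivation from \cite[Theorem~A2]{Betti-4} that you never use.
\item In Shioda's formula $(R\cdot O)=\frac12\langle R,R\rangle-\chi+\frac12\sum_v\mathrm{contr}_v(R)$, the signs of $\chi$ and of the correction terms are reversed in your version. This is harmless since they are bounded.
\item Your treatment of a constant $P$ via surjectivity of $Q\colon C\to E_0$ matches the paper.
\end{enumerate}
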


The proof of Lemma~\ref{lem:solutions} is essentially contained in  \cite[Theorem~A2]{Betti-4}. We explain next the small differences between our statement and the one from \cite{Betti-4}; even though \cite[Theorem~A2]{Betti-4} is stated for non-isotrivial elliptic surfaces defined over $\R$, its proof contains all the necessary ingredients for the conclusion from Lemma~\ref{lem:solutions}.

\begin{proof}[Proof of Lemma~\ref{lem:solutions}.]
First, we consider the special case when $\mathcal{E}$ is isotrivial \emph{and} $\sigma_P$ is a constant section. In this case, due to our assumption, we have that $\sigma_Q$ is not constant. So, in this case (at the expense of replacing $C$ by a finite cover), we have $\mathcal{E}=E_0\times_{{\rm Spec}(\Qbar)}C$ for some elliptic curve $E_0$ defined over $\Qbar$; also, we have that $P\in E_0(\Qbar)$, which means that $P=P_\l$ for \emph{all} points $\l\in C(\Qbar)$. Also, due to our hypothesis, in this case we know that $Q$ is \emph{not} a constant point; hence, $Q$ is a generic point for $E_0$. Hence, for any integer $n$,  there exists some $\l\in C(\Qbar)$ such that $Q_\l=[n](P)$ on $E_0$ (as we identify, in this case, each fiber of $\mathcal{E}$ with $E_0$). Furthermore, since $P$ is non-torsion, the points $[n](P)$ are all distinct and thus, also the set of $\lambda$'s for which $Q_\l=[n](P)$ is infinite, as desired.
  
So, from now on, we assume that (exactly) one the following two conditions is met:
\begin{equation}
\label{eq:(1)}
\text{$\mathcal{E}$ is not isotrivial (and $P\in E(\Qbar(C))$ is a non-torsion point); or}
\end{equation}
\begin{equation}  
\label{eq:(2)}
\text{$\mathcal{E}$ is isotrivial, and $\sigma_P$ is not a constant section.}
\end{equation}

For each $\l\in C(\C)$ such that the fiber $E_\l$ is smooth, there exists a complex analytic uniformization map (see \cite[Chapter~6]{Silverman}) $\exp_\l:\C/\Gamma_\l\lra E_\l(\C)$ (where $\Gamma_\l$ is a lattice inside $\C$) with the property that
\begin{equation}
\label{eq:75}
\exp_\l(nz)=[n]\left(\exp_\l(z)\right)\text{ for each integer $n$.}
\end{equation}
Since the analytic map $\exp_\l:\C/\Gamma_\l\lra E_\l(\C)$ is a bijection, there is a locally analytic inverse of $\exp_\l(z)$ (i.e., the elliptic logarithm $\log_\l(z)$) in a small open subset of $E_\l(\C)$ (with respect to the induced complex topology on $E_\l(\C)$). So, our desired conclusion that $[n](P_\l)=Q_\l$ can be re-written as follows:
\begin{equation}
\label{eq:70}
n\cdot\log_\l(P_\l)-\log_\l(Q_\l)\in \Gamma_\l.
\end{equation}
We let $\omega_1(\l)$ and $\omega_2(\l)$ be generators for $\Gamma_\l$, which vary locally analytically (i.e., when $\l$ lives in a sufficiently small open subset of $C(\C)$).   
We write $\log_\l(P_\l)$ and $\log_\l(Q_\l)$ 
 as linear combinations with real coefficients (\emph{Betti coordinates}) of $\omega_1(\l),\omega_2(\l)$ (for more details on Betti coordinates and their applications, see \cite{Betti-2,Betti-1,Betti-3}). So, we write
$$\log_\l(P)=a_1(\l)\omega_1(\l)+a_2(\l)\omega_2(\l)\text{ and }\log_\l(Q)= b_1(\l)\omega_1(\l)+b_2(\l)\omega_2(\l),$$
where $a_1(\l),a_2(\l),b_1(\l),b_2(\l)$ are real analytic functions, where $\l$ is a point of $C(\C)$, which varies in a complex small disk denoted $D$. We write $A(\l):=(a_1(\l),a_2(\l))$ and $B(\l):=(b_1(\l),b_2(\l))$. So, equation~\eqref{eq:70} can be re-stated (on $D$) as follows: 
\begin{equation}
\label{eq:71}
[n](P_\l)=Q_\l\text{ if and only if }n\cdot A(\l)-B(\l)\in\Z\times\Z.
\end{equation}
Now, viewing $D$ as an open subset of $\R^2$, we can compute the Jacobian matrix of the function $A=(a_1,a_2):D\lra \R^2$. In both cases~\eqref{eq:(1)}~and~\eqref{eq:(2)} above, the Jacobian matrix of $A$ has rank $2$ generically; when $\mathcal{E}$ is non-isotrivial, this is proven in \cite[Chapter~3]{Umberto}, while if $\mathcal{E}=E_0\times_{{\rm Spec}(\Qbar)}C$, this follows from the fact that $P$ is a generic point for $E_0$. Hence, at the expense of shrinking $D$, we may assume the Jacobian matrix of $A$ has rank $2$ on the entire set $D$. Then for large $n$, we have that
$$F_n(\l):=A(\l)-\frac{B(\l)}{n}$$
is real analytic on $D$ and also its Jacobian matrix has rank $2$. Arguing identically as in \cite[Theorem~A2]{Betti-4}, one obtains that for large $n$,  $F_n(D)$  contains a nonempty open set $V\subset \R^2$, which is independent of $n$. Clearly,  for large $n$, $V$ contains rational points whose coordinates have denominators dividing $n$. More precisely, there exist at least $cn^2$ (for some positive real number $c$, depending on $V$, but independent of $n$) suitable $\l\in D$ such that  $nF_n(\l)\in\Z\times\Z$. Then equation~\eqref{eq:71} yields that $[n](P_\l)=Q_\l$ for each of these $cn^2$ points $\l\in C$. As $n$ can be chosen arbitrarily large, we obtain the desired conclusion in Lemma~\ref{lem:solutions}.
\end{proof}

\begin{remark}
\label{rem:primary}
When $Q=0\in E$ and $E$ is an elliptic curve defined over a number field $K$, a stronger version of Lemma~\ref{lem:solutions} was studied in the context of \emph{primary primes} for an elliptic divisibility sequence. So, one views $E$ as the generic fiber of an elliptic arithmetic surface $\pi:\mathcal{E}\lra {\rm Spec}(\OO_K)$ (where $\OO_K$ is the ring of algebraic integers in $K$). Also, for any given point $R\in E(K)$, one considers the corresponding section $\sigma_R$ of $\pi$ and for a prime $v$ of $\OO_K$ (of good reduction), we denote by $R_v:=\sigma_R(v)$ the corresponding specialization of $R$ at the place $v$. Then it was proven (see \cite{Patrick, Patrick-2}) that for all sufficiently large positive integers $n$, there exists a (nonarchimedean) place $v_n$ of $K$ (such that the fiber $\pi^{-1}(v_n)=:E_{v_n}$ is smooth) with the property that on $E_{v_n}$ we have:
\begin{equation}
\label{eq:primary}
[n]\left(P_{v_n}\right)=0\text{ and moreover, }[m]\left(P_{v_n}\right)\ne 0\text{ when $0<m<n$.}
\end{equation}
The existence of primary primes for all sufficiently large positive integers $n$ is a \emph{stronger} conclusion than the fact that there are infinitely many  primes $v$ of $\OO_K$ for which there exists \emph{some} positive integer $n$ such that $[n](P_v)=0$. 

Verzobio \cite{Verzobio} extended the study of primary primes by considering a target point $Q\ne 0$. Under the hypothesis that the target point $Q$ is dynamically related to the starting point $P$ (i.e., $\Phi(P)=\Psi(Q)$ for some nontrivial endomorphisms $\Phi,\Psi\in\End(E)$), Verzobio \cite{Verzobio} proved that again for all sufficiently large positive integers $n$, there exists a prime $v_n$ of $\OO_K$ such that
\begin{equation}
\label{eq:primary-2}
[n]\left(P_{v_n}\right)=Q_{v_n}\text{ and moreover, }[m]\left(P_{v_n}\right)\ne Q_{v_n}\text{ when $0<m<n$.}
\end{equation}
Furthermore, Matteo Verzobio confirmed to us that his method extends almost verbatim to cover the function field case (i.e., when $E$ is an elliptic curve defined over $\Qbar(C)$, which is the setting of Lemma~\ref{lem:solutions}); the necessary ingredients for modifying the argument from \cite{Verzobio} to the function field case are contained in the papers \cite{Bartosz, Slob}.
\end{remark}


\subsection{Proof of Proposition~\ref{prop:converse} assuming conditions~(A)~or~(B) hold}
The following result is an easy consequence of Lemma~\ref{lem:solutions}.

\begin{lemma}
\label{lem:A}
The conclusion in Proposition~\ref{prop:converse} holds assuming condition~(A) is satisfied. 
\end{lemma}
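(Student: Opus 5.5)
Without loss of generality suppose condition~(A) holds with $i=1$, i.e., $[k](P_1)=Q$ on $E$ for some nonzero integer $k$; the case $i=2$ is symmetric. Then for every $\l\in C(\Qbar)$ outside the finite set of singular fibers (and of points where the relevant sections fail to specialize), we have $[k](P_{1,\l})=Q_\l$ on $E_\l$, since the relation on the generic fiber specializes. So the first half of equation~\eqref{eq:100} holds automatically with $m_{1,\l}=k$ for all but finitely many $\l$. It remains to find infinitely many $\l$ with $[m_{2,\l}](P_{2,\l})=Q_\l$ for some nonzero integer $m_{2,\l}$.

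For this we apply Lemma~\ref{lem:solutions} with $P:=P_2$ (non-torsion by hypothesis) and target $Q$. Its output is an infinite set of $\l\in C(\Qbar)$ with $[n_\l](P_{2,\l})=Q_\l$ for nonzero $n_\l$. Removing the finitely many bad $\l$ leaves an infinite set on which both relations hold. We then set $m_{1,\l}=k$ and $m_{2,\l}=n_\l$.

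The only thing to check is the isotriviality hypothesis of Lemma~\ref{lem:solutions}: if $\mathcal{E}$ is isotrivial, we need that not both $\sigma_{P_2}$ and $\sigma_Q$ are constant. Suppose, for contradiction, that both are constant. Since $[k](P_1)=Q$ with $Q\in E_0(\Qbar)$ constant, $P_1$ lies in the finite set $[k]^{-1}(Q)\subset E_0(\Qbar)$. A section whose values lie in a finite set is constant because $C$ is irreducible. So $\sigma_{P_1}$ would also be constant, contradicting the standing assumption in Proposition~\ref{prop:converse} that not all three sections are constant. Hence Lemma~\ref{lem:solutions} applies. This verification is the only mildly delicate step; the rest is immediate.
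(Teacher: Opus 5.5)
Your proposal is correct and follows the paper's proof essentially verbatim. You specialize $[k](P_1)=Q$ to every fiber, apply Lemma~\ref{lem:solutions} to the pair $(P_2,Q)$, and rule out the case where both $\sigma_{P_2}$ and $\sigma_Q$ are constant because $\sigma_{P_1}$ would then be constant too. Your justification of that last step, via the finiteness of $[k]^{-1}(Q)$ and the irreducibility of $C$, is a slightly more explicit version of the paper's parenthetical remark.
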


\begin{proof}
Without loss of generality, assume there exists a nonzero integer $k$ such that $[k](P_1)=Q$; in particular, this means that for \emph{all} points $\l\in C(\Qbar)$ (for which the corresponding fiber $E_\l$ is an elliptic curve), we have $[k](P_{1,\l})=Q_\l$. On the other hand, Lemma~\ref{lem:solutions} yields infinitely many points $\l\in C(\Qbar)$ along with nonzero integers $n_\l$ such that $[n_\l](P_{2,\l})=Q_\l$. 

Note that if $\mathcal{E}$ is isotrivial, we do \emph{not} have that both sections $\sigma_{P_2}$ and $\sigma_Q$ are constant since otherwise, we would get that also $\sigma_{P_1}$ is constant (since $[k](P_1)=Q$ for a nonzero integer $k$), which contradicts one of the hypotheses in Proposition~\ref{prop:converse}. Thus, indeed, we can apply Lemma~\ref{lem:solutions} to the triple $(\mathcal{E},P_2,Q)$. This concludes our proof of Lemma~\ref{lem:A}.
\end{proof}

\begin{lemma}
\label{lem:B}
The conclusion in Proposition~\ref{prop:converse} holds assuming condition~(B) is satisfied. 
\end{lemma}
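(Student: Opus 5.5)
The plan is to reduce condition~(B) to a single application of Lemma~\ref{lem:solutions}, with the common multiple of $P_1$ and $P_2$ as the new starting point. Assume $[k_1](P_1)=[k_2](P_2)$ with $k_1,k_2$ nonzero integers, and set
$$R:=[k_1](P_1)=[k_2](P_2)\in E(\Qbar(C)).$$
Then I apply Lemma~\ref{lem:solutions} to the triple $(\mathcal{E},R,Q)$. It should produce infinitely many $\l\in C(\Qbar)$ with nonzero integers $n_\l$ such that $[n_\l](R_\l)=Q_\l$.

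For each such $\l$ (excluding the finitely many singular fibers), specializing the global relation gives $R_\l=[k_1](P_{1,\l})=[k_2](P_{2,\l})$. Therefore
$$[n_\l k_1](P_{1,\l})=[n_\l k_2](P_{2,\l})=Q_\l,$$
so $m_{i,\l}:=n_\l k_i$ are nonzero integers satisfying equation~\eqref{eq:100}. This gives the conclusion for infinitely many $\l$.

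The work lies in checking the two hypotheses of Lemma~\ref{lem:solutions}.

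\emph{$R$ is non-torsion.} If $[\ell](R)=0$ for some positive integer $\ell$, then $[\ell k_1](P_1)=0$ with $\ell k_1\ne0$. This contradicts the assumption that $P_1$ is non-torsion.

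\emph{The isotrivial case.} Here I expect the only genuine subtlety. Suppose $\mathcal{E}=E_0\times C$ (after a finite base change) and both $\sigma_R$ and $\sigma_Q$ are constant. Since $\sigma_{P_1}$ is a morphism $C\to E_0$, its composition with $[k_1]$ is the constant map with value $R$. Hence the image of the irreducible curve $C$ under $\sigma_{P_1}$ lies in the finite set $[k_1]^{-1}(R)$. By connectedness, $\sigma_{P_1}$ is constant. The same argument shows $\sigma_{P_2}$ is constant. Then all three sections $\sigma_{P_1},\sigma_{P_2},\sigma_Q$ would be constant, which is excluded by the hypothesis of Proposition~\ref{prop:converse}. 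So at least one of $\sigma_R,\sigma_Q$ is non-constant, and Lemma~\ref{lem:solutions} applies.
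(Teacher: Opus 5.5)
Your proposal is correct and follows essentially the same route as the paper. The paper also applies Lemma~\ref{lem:solutions} to the pair $([k_1](P_1),Q)$, observes that in the isotrivial case constancy of $[k_1](P_1)$ forces $P_1,P_2\in E_0(\Qbar)$, and concludes $[nk_1](P_{1,\l})=[nk_2](P_{2,\l})=Q_\l$; you additionally check explicitly that $[k_1](P_1)$ is non-torsion, a hypothesis of Lemma~\ref{lem:solutions} that the paper leaves implicit.
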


\begin{proof}
So, we know there exist nonzero integers $k_1$ and $k_2$ such that $[k_1](P_1)=[k_2](P_2)$. Arguing as in the proof of Lemma~\ref{lem:A}, we obtain that the hypotheses from Lemma~\ref{lem:solutions} are met by the triple $(\mathcal{E},[k_1](P_1),Q)$. Indeed, if $\mathcal{E}=E_0\times_{{\rm Spec}(\Qbar)}C$ (for some elliptic curve $E_0$ defined over $\Qbar$) and both $[k_1](P_1)$ and $Q$ were points in $E_0(\Qbar)$, then also $P_1,P_2\in E_0(\Qbar)$ (note that $[k_1](P_1)=[k_2](P_2)$ and $k_1k_2\ne 0$).

Now, applying Lemma~\ref{lem:solutions} to the points $[k_1](P_1)$ and $Q$ yields the existence of infinitely many points $\l\in C(\Qbar)$ along with nonzero integers $n$ such that 
\begin{equation}
\label{eq:11}
[nk_1]\left(P_{1,\l}\right)=Q_\l.
\end{equation}
Equation~\eqref{eq:11} yields that also $[nk_2](P_{2,\l})=Q_\l$, thus providing the desired conclusion in Proposition~\ref{prop:converse}.
\end{proof}

So, in order to complete the proof of Proposition~\ref{prop:converse}, it suffices to prove the conclusion~\eqref{eq:100} using the hypothesis~(C).


\subsection{Proof of Proposition~\ref{prop:converse} assuming condition~(C) holds} So, $E$ is a $CM$ elliptic curve whose endomorphism ring is identified with an order $\OO=\ZZ[\theta]$ in an imaginary quadratic field (see equation~\eqref{eq:theta} for the definition of $\theta$).

\begin{proposition}
\label{prop:converse_main}
With the notation as in Proposition~\ref{prop:converse}, assume in addition that $E$ has complex multiplication by $\OO=\Z[\theta]$. Also, we assume there exist nonzero $\alpha_1,\alpha_2,\beta\in\OO$ with $\alpha_1/\alpha_2\notin\Q$ such that 
\begin{equation}\label{eq:relations}
  [\alpha_1](P_1)=[\alpha_2](P_2)=[\beta](Q) \quad\text{ (in $E$).}
\end{equation}
Then there exist infinitely many $\lambda\in C(\QQbar)$ with the property that there exist nonzero integers $m_{1,\l},m_{2,\l}$  such that
\[
  \left[m_{1,\l}\right]\left(P_{1,\lambda}\right)= \left[m_{2,\l}\right]\left(P_{2,\lambda}\right)=Q_\lambda.
\]
\end{proposition}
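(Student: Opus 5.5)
The plan is to produce infinitely many $\l$ at which $Q_\l$ generates the $[\gamma]$-torsion subgroup $E_\l[\gamma]$, for a well-chosen $\gamma=a+4\theta\in\OO$. The rest is then done by the Chinese Remainder Theorem together with Corollary~\ref{cor:action}.

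\emph{Choice of $\gamma$.} Set $M:=N(\alpha_1)N(\alpha_2)N(\beta)$. By Lemma~\ref{lem:arithm_2} there are infinitely many odd $a$ with $\gcd(N(a+4\theta),2M)=1$. Put $\gamma:=a+4\theta$; no rational prime divides $\gamma$, since $a$ is odd. By Proposition~\ref{prop:more_general}, $\OO/\gamma\OO\cong\Z/N(\gamma)\Z$. Hence on each smooth fiber $E_\l$ (which has CM by $\OO$ for all but finitely many $\l$) the group $E_\l[\gamma]$ is cyclic of order $N(\gamma)$.

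\emph{Producing the $\l$'s.} I would redo the Betti-map argument of Lemma~\ref{lem:solutions} with target $0$ and with the lattice $\gamma^{-1}\Gamma_\l$ in place of $\frac1n\Gamma_\l$. The condition "$Q_\l$ is a generator of $E_\l[\gamma]$'' says that $\log_\l(Q_\l)$ is a primitive class of $\gamma^{-1}\Gamma_\l/\Gamma_\l$. In Betti coordinates this is a finite set of $\asymp N(\gamma)$ points of the torus $\R^2/\Z^2$, equidistributed as $N(\gamma)\to\infty$; a positive proportion of them are primitive, since there are $\phi(N(\gamma))$ generators.

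By hypothesis $\sigma_Q$ is not constant when $\mathcal E$ is isotrivial. If $Q$ were constant, then $[\alpha_1](P_1)=[\beta](Q)$ would force $P_1$ to be constant, and likewise $P_2$; so the hypothesis of Proposition~\ref{prop:converse} rules this out. Hence the Betti map of $Q$ has rank $2$ on a small disk $D$, and its image contains a fixed open set $V\subset\R^2$. For $N(\gamma)$ large, $V$ therefore contains primitive points, which gives a $\l\in D$ with $Q_\l$ a generator of $E_\l[\gamma]$. Letting $a$ vary yields infinitely many such $\l$, since the orders $N(\gamma)$ are unbounded. I expect this transplantation of the Betti-coordinate argument from Lemma~\ref{lem:solutions} to the CM lattice $\gamma^{-1}\Gamma_\l$, with the extra primitivity requirement, to be the main technical point.

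\emph{Conclusion on such a fiber.} Fix such a $\l$ and $i\in\{1,2\}$. From $[\alpha_i](P_{i,\l})=[\beta](Q_\l)\in E_\l[\gamma]$ we get $[\gamma\alpha_i](P_{i,\l})=0$. Since $\gcd(N(\gamma),N(\alpha_i))=1$, we have $E_\l[\gamma\alpha_i]=E_\l[\gamma]\oplus E_\l[\alpha_i]$. Write $P_{i,\l}=u_i+v_i$ with $u_i\in E_\l[\gamma]$ and $v_i\in E_\l[\alpha_i]$. Then $[\alpha_i]u_i=[\beta]Q_\l$.

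By Corollary~\ref{cor:action}(ii), $[\alpha_i]$ and $[\beta]$ act as automorphisms of the cyclic group $U=E_\l[\gamma]=\langle Q_\l\rangle$. Hence $u_i$ also generates $U$, so $Q_\l=[t_i]u_i$ for some integer $t_i$. Since $v_i$ is killed by $[N(\alpha_i)]$ and $\gcd(N(\gamma),N(\alpha_i))=1$, the CRT gives a nonzero integer $m_{i,\l}$ with
\[
m_{i,\l}\equiv t_i \pmod{N(\gamma)},\qquad m_{i,\l}\equiv 0 \pmod{N(\alpha_i)}.
\]
Then $[m_{i,\l}](P_{i,\l})=[t_i]u_i+0=Q_\l$ for $i=1,2$, which is the conclusion of Proposition~\ref{prop:converse_main}.

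The hypothesis $\alpha_1/\alpha_2\notin\Q$ plays no role in this direction; it matters only for the case distinction with (B).
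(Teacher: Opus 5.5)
Your overall architecture works, and the endgame on each fiber is essentially the paper's. Your CRT choice $m_{i,\l}\equiv 0\pmod{N(\alpha_i)}$, made via $E_\l[\gamma\alpha_i]=E_\l[\gamma]\oplus E_\l[\alpha_i]$, reproduces the paper's $m_{i,\l}=k_{i,\l}N(\alpha_i)$. The paper gets this more directly: $[N(\alpha_i)](P_{i,\l})=[\bar{\alpha_i}\beta](Q_\l)$ already lies in $U_\l$ and generates it by Corollary~\ref{cor:action}(ii).

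The real difference is how the $\l$'s are produced. The paper never fixes $\gamma$ and never needs $Q_\l$ to generate $E_\l[\gamma]$. It fixes only the class $a\equiv 2\ell-1\pmod{2M}$ and rewrites $[a+4\theta](Q_\l)=0$ as $[n]([2M](Q_\l))=[1-2\ell-4\theta](Q_\l)$. Lemma~\ref{lem:solutions}, applied to the non-torsion point $[2M](Q)$, then produces $\l$ and $a_\l$ together, and $U_\l=\langle Q_\l\rangle$ is whatever cyclic group $Q_\l$ happens to generate.

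Your version of that step is not justified as written. Equidistribution of $E_\l[\gamma]$ in Betti coordinates, together with the count $\phi(N(\gamma))$ of generators, does not force a generator into $V$: the generators could a priori avoid $V$. Moreover, $\phi(N(\gamma))/N(\gamma)$ is not bounded below along your family. For example, with $\OO=\Z[i]$, $N(a+4i)=a^2+16$ can be divisible by every small prime $p\equiv 1\pmod 4$ not dividing $M$. You would need equidistribution of the primitive points themselves, e.g.\ by M\"obius inversion over the subgroups of the cyclic group.

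But the Betti machinery is unnecessary here. CM forces $j(E)\in\Qbar$, so $\mathcal{E}$ is isotrivial; the paper notes this at the start of its proof. After a finite base change, $Q$ is a nonconstant morphism $C\to E_0$ of projective curves, hence surjective on $\Qbar$-points. So any generator $R$ of $E_0[\gamma]$ equals $Q_\l$ for some $\l$. Distinct values of $N(\gamma)$ give distinct $\l$, since $Q_\l$ then has exact order $N(\gamma)$. With this repair your route is more elementary than the paper's and avoids Betti coordinates entirely.

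Finally, ``$\OO/\gamma\OO\cong\Z/N(\gamma)\Z$, hence $E_\l[\gamma]$ is cyclic'' is too quick, since a group of order $N$ killed by $N$ need not be cyclic. Argue instead as follows. If $E_\l[p]\subseteq E_\l[\gamma]$ for a prime $p$, then $\gamma=p\psi$ with $\psi\in\End(E_\l)=\OO$. That forces $p\mid a$ and $p\mid 4$, which is impossible for odd $a$.
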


\begin{proof}
First, we note that since $E$ has $CM$, then its $j$-invariant must be contained in $\Qbar$ (see \cite[Chapter~2]{Silverman-2}), i.e., $\mathcal{E}$ is isotrivial. So, the hypothesis from Proposition~\ref{prop:converse} yields that at least one of the sections $\sigma_{P_1},\sigma_{P_2},\sigma_Q$ is not constant. However, due to relation~\eqref{eq:relations}, we get that \emph{none} of these three sections is constant.

Let $M:=N(\alpha_1)\cdot N(\alpha_2)\cdot N(\beta)$, where $N(\cdot )$ is the usual norm for $\Q(i\sqrt{D})$ (see equation~\eqref{eq:norm}). Let $\ell\in\{1,\dots, M\}$ satisfying the conclusion of Lemma~\ref{lem:arithm_2}, i.e., for each (odd) integer $a$ satisfying $a\equiv 2\ell-1\pmod{2M}$, we have that 
\begin{equation}
\label{eq:40}
\gcd\left(N\left(a+4\theta\right),2M\right)=1.
\end{equation}
\begin{lemma}
\label{lem:solutions_2}
There exist infinitely many $\l\in C(\Qbar)$ with the property that there exist some positive integer $a_\l$ such that $a_\l\equiv 2\ell-1\pmod{2M}$ and $[a_\l+4\theta](Q_\l)=0$. 
\end{lemma}

\begin{proof}[Proof of Lemma~\ref{lem:solutions_2}.]
Letting $a_\l=2Mn_\l + (2\ell-1)$ (for some $n_\l\in \N$), we can re-write the conclusion from Lemma~\ref{lem:solutions_2} as 
\begin{equation}
\label{eq:50}
[n_\l]\left([2M](Q_\l)\right)=[1-2\ell-4\theta](Q_\l).
\end{equation} 
The desired conclusion from Lemma~\ref{lem:solutions_2} follows from Lemma~\ref{lem:solutions} applied to the starting point $[2M](Q)$ and with the target point $[1-2\ell-4\theta](Q)$ (note the hypotheses from Lemma~\ref{lem:solutions} are met because $\sigma_Q$ is not a constant section). 
\end{proof}

Now, let a point $\l\in C(\Qbar)$ along with a suitable positive integer $a_\l$ as in the conclusion of Lemma~\ref{lem:solutions_2}; we let $U_\l$ be the cyclic subgroup of $E_\l$ generated by $Q_\l$.  Corollary~\ref{cor:action},~part~(i) yields that for any $\gamma\in\OO$, we have a natural induced group homorphism $\widetilde{[\gamma]}:U_\l\lra U_\l$ given by $Q\mapsto [\gamma](Q)$. Furthermore, part~(ii) of Corollary~\ref{cor:action} yields that $\widetilde{[\beta]}$, $\widetilde{[\bar{\alpha_1}]}$ and $\widetilde{[\bar{\alpha_2}]}$ are group automorphisms of $U_\l$. 
\begin{lemma}
\label{lem:multiples}
With the above notation, we have that 
\begin{equation}
\label{eq:51}
[N(\alpha_1)]\left(P_{1,\l}\right)\text{, }[N(\alpha_2)]\left(P_{2,\l}\right) \in U_\l.
\end{equation}
Moreover, both $[N(\alpha_1)](P_{1,\l})$ and $[N(\alpha_2)](P_{2,\l})$ are generators for the cyclic group $U_\l$.
\end{lemma}  

\begin{proof}[Proof of Lemma~\ref{lem:multiples}.]
Let $i\in\{1,2\}$. Equation~\eqref{eq:relations} yields that $[\alpha_i](P_{i,\l})\in U_\l$ and therefore, also $[\bar{\alpha_i}\cdot \alpha_i](P_{i,\l})\in U_\l$; thus, we obtain the conclusion from equation~\eqref{eq:51}. Furthermore, we have the more precise relation coming from equation~\eqref{eq:relations}:
\begin{equation}
\label{eq:53}
\left[N(\alpha_i)\right]\left(P_{i,\l}\right)=\left[\bar{\alpha_i}\cdot \beta\right]\left(Q_\l\right).
\end{equation}
Because the induced maps $\widetilde{[\bar{\alpha_i}]}$ and $\widetilde{[\beta]}$ are group automorphisms of $U_\l$, then we obtain that $[N(\alpha_i)](P_{i,\l})$ is a generator for the cyclic group $U_\l$ (for each $i=1,2$).

This concludes our proof of Lemma~\ref{lem:multiples}.
\end{proof}

Lemma~\ref{lem:multiples} yields the existence of positive integers $k_{1,\l}$ and $k_{2,\l}$ such that
\begin{equation}
\label{eq:52}
\left[k_{1,\l}N(\alpha_1)\right]\left(P_{1,\l}\right)=\left[k_{2,\l}N(\alpha_2) \right]\left(P_{2,\l}\right)=Q_\l.
\end{equation}
Equation~\eqref{eq:52} along with the fact that there exist infinitely many such points $\l\in C(\Qbar)$ (according to Lemma~\ref{lem:solutions_2}) deliver the desired conclusion in Proposition~\ref{prop:converse_main}.
\end{proof}

As previously explained, Lemmas~\ref{lem:A}~and~\ref{lem:B}, along with Proposition~\ref{prop:converse_main} finish our proof of Proposition~\ref{prop:converse}.


\end{document}